\newtheorem{thm}{Theorem}[section]
\newtheorem{lem}[thm]{Lemma}
\let\ep=\epsilon
\let\vph=\varphi
\let\abs=\envert
\title{On the divisibility of odd perfect numbers, quasiperfect numbers and
amicable numbers by a high power of a prime\footnote{
2010 Mathematics Subject Classification: 11A25, 11A36, 11A51, 11N36, 11Y05, 11Y70.}
\footnote{Key words and phrases: Odd perfect numbers; multiperfect numbers; quasiperfect numbers;
amicable numbers; sieve methods.}}
\date{}
\author{Tomohiro Yamada}
\begin{document}
\maketitle

\begin{abstract}
We shall give an explicit upper bound for the smallest prime factor of multiperfect numbers
of the form $N=p_1^{\alpha_1}\cdots p_s^{\alpha_s} q_1^{\beta_1}\cdots q_t^{\beta_t}$
with $\beta_1, \ldots, \beta_t$ bounded by a given constant.  We shall also give
similar results for quasiperfect numbers and relatively prime amicable pairs
of opposite parity.
\end{abstract}

\section{Introduction}\label{intro}

Let $\sigma(N)$ denote the sum of divisors of $N$ for a positive integer $N$ and
define $h(N)=\sigma(N)/N$.  An integer $N$ is said to be perfect if $h(N)=2$.
It is one of oldest and most infamous problems whether there exists any odd perfect number.
Moreover, it is also unknown whether there exists any odd integer $N$ with $h(N)=k$
for some integer $k>1$.

Although it is unknown whether there exists any odd perfect number,
it is known that an odd perfect number must satisfy various conditions.
Suppose that $N$ is an odd perfect number.
Euler has shown that $N=p^{\alpha} q_1^{\beta_1}\cdots q_t^{\beta_t}$, where
$p, q_1, \ldots, q_t$ are distinct odd primes with $p\equiv \alpha\equiv 1\pmod{4}$
and $\beta_1, \ldots, \beta_t$ even.
Steuerwald \cite{St} proved that we cannot have $\beta_1=\cdots =\beta_t=2$.
If $\beta_1=\cdots =\beta_t=\beta$, then it is known that
$\beta\neq 4$ (Kanold \cite{Ka1}), $\beta\neq 6$ (Hagis and McDaniel \cite{HM}),
$\beta\neq 10, 24, 34, 48, 124$ (McDaniel and Hagis \cite{MDH}), $\beta\neq 12, 16, 22, 28, 36$ (Cohen and Williams \cite{CoW}).
In their paper \cite{MDH}, Hagis and McDaniel conjecture that
$\beta_1=\cdots =\beta_t=\beta$ does not occur.
The author \cite{Ymd1} proved that there are only finitely many odd perfect numbers
for any given $\beta$.
McDaniel \cite{Mc1} proved that we cannot have $\beta_1\equiv\cdots \equiv\beta_t\equiv 2\pmod{6}$,
i.e., $3$ cannot divide all of $\beta_1+1, \beta_2+1, \ldots, \beta_t+1$.
If $m$ divides all of $\beta_1+1, \beta_2+1, \ldots, \beta_t+1$,
then it is known that $m\neq 35$ (Hagis and McDaniel \cite{MDH})
and $m\neq 65$ (Evans and Pearlman \cite{EP}) and eventually
Fletcher, Nielsen and Ochem \cite{FNO} showed that $m\neq 5$ as a by-product
of their main result, which will be discussed later.
In general, if a prime $l$ divides all of $\beta_1+1, \beta_2+1, \ldots, \beta_t+1$,
then $l^4$ must divide $N$ by a result of Kanold \cite{Ka1}.

However, if we relax the condition that there exists some integer
dividing all of $\beta_1+1, \beta_2+1, \ldots, \beta_t+1$,
then the situation becomes quite different.
The simplest problem in this direction would be whether there exists
an odd perfect number of the form $p^{\alpha}q_1^{\beta_1}q_2^{\beta_2}\cdots q_t^{\beta_t}$
with $p\equiv \alpha\equiv 1\pmod{4}$ and $\beta_i\leq 4$.
This problem has been studied by McDaniel \cite{Mc2} and Cohen \cite{Co2}.
These papers give {\it lower} bounds for the smallest prime factor of $N$:
the first paper shows it must be at least $101$ and the second shows it must be at least $739$.

In general, we can make a conjecture that for a fixed finite
set $\mathcal{P}$ of integers, a fixed rational number $n/d$ and a fixed integer $s$,
there exist only finitely many odd $n/d$-perfect numbers
$N=p_1^{\alpha_1}\cdots p_s^{\alpha_s} q_1^{\beta_1}\cdots q_t^{\beta_t}$ with $\beta_1+1, \ldots, \beta_t+1$ contained in $\mathcal{P}$.

This conjecture still seems to be far beyond reach,
though this conjecture is weaker than the finiteness conjecture
of odd $n/d$-perfect numbers.
In the preprint \cite{Ymd2}, using sieve methods, the author has proved that
for a fixed finite set $\mathcal{P}$ of integers, a fixed rational number $n/d$ and a fixed integer $s$,
there exists an effective constant $C$ such that odd $n/d$-perfect numbers of the form
$N=p_1^{\alpha_1}\cdots p_s^{\alpha_s} q_1^{\beta_1}\cdots q_t^{\beta_t}$ with $\beta_+11, \ldots, \beta_t+1$ contained in $\mathcal{P}$ must have a prime divisor smaller than $C$.
Moreover, the author has proved that, in the case $N$ is perfect and $\beta_i\leq 4$, then
$C$ can be taken to be $\exp(4.97401\times 10^{10})$.

Using the author's method, but with the aid of the large sieve instead of
Selberg's sieve used by the author \cite{Ymd1}, Fletcher, Nielsen and Ochem \cite{FNO} proved that
if $N=p_1^{\alpha_1}\cdots p_s^{\alpha_s} q_1^{\beta_1}\cdots q_t^{\beta_t}$
satisfies $h(N)=n/d$ and for each $i$, $\beta_i+1$ has a prime factor belonging to
a finite set $\mathcal{P}$ of primes, then $N$ has a prime divisor small than a effective constant
$C$, depending only on $n, s$ and $\mathcal{P}$.
Moreover, they proved that the smallest prime factor of an odd perfect number
$N$ satisfying the above condition with $\mathcal{P}=\{3, 5\}$
lies between $10^8$ and $10^{1000}$, improving results in \cite{Co2} and \cite{Ymd2}.
This implies that neither $\mathcal{P}=\{3\}$ nor $\mathcal{P}=\{5\}$ can occur
since a prime $l$ must divide $N$ if $\mathcal{P}=\{l\}$ by the result of Kanold \cite{Ka1}
mentioned above.

However, they did not give an explicit value for their effective $C$ in other cases.
In this paper, the author would like to give an explicit upper bound for $C$ in general cases.
\begin{thm}\label{thm1}
Let $\mathcal{P}$ be a finite but nonempty set of primes
and $n, d, \beta_1, \ldots, \beta_t$ be positive integers such that for each $i=1, \ldots , t$,
$\beta_i+1$ is divisible by at least one prime in the set $\mathcal{P}$
and let $P$ denote the product $\prod_{p\in\mathcal{P}}p$.
Define $\Omega_\mathcal{P}(x)$ to be the number of prime factors of $x$ that belong to $\mathcal{P}$,
counting multiplicity and let $s_0=s+\omega(n)+\Omega_\mathcal{P}(n)$.
Furthermore, let $L(\ep, n)$ be the real number $x$ such that $\Omega(n)=\ep x/(\log^2 x)$,
\begin{equation}
x_1=x_1(l)=x_1(s_0; l, P)=\max\{\exp P, \exp (100.7l), \exp(\exp(9)), 10s_0(l-1)+1\}
\end{equation}
for each prime $l$ in $\mathcal{P}$ and, for any $\ep>0$, $C_0=C_0(d, s, n, P, \ep)$ be the maximum among quantities $2(d+1)s, x_1(l)^{8.35}, L(\ep, n)$ and
\begin{equation}
\ep+\exp \left(\frac{(17.62196\vph(P)+129.5214(l-1))\abs{\mathcal{P}}\log x_1}{(l-1)\log\frac{n}{d}}\right)
\end{equation}
with $l$ running over all primes in $\mathcal{P}$.

If $N=p_1^{\alpha_1}\cdots p_s^{\alpha_s} q_1^{\beta_1}\cdots q_t^{\beta_t}$
satisfies $h(N)=\frac{n}{d}$, then, for any $\ep>0$, $N$ has a prime factor smaller than $C_0$.
\end{thm}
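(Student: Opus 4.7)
The plan is to argue by contradiction: assume the smallest prime factor of $N$ is at least $C_0$, combine the cyclotomic divisibility structure of $\sigma(N)$ with an explicit large sieve to control the primes $q_i$, and contradict this with the multiplicative identity $h(N)=n/d$.

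For each $l\in\mathcal{P}$ set $T_l=\{i:l\mid\beta_i+1\}$; by hypothesis $\bigcup_{l\in\mathcal{P}}T_l=\{1,\ldots,t\}$. For $i\in T_l$, writing $\beta_i+1=lm_i$, the cyclotomic value $\Phi_l(q_i)=(q_i^l-1)/(q_i-1)$ divides $\sigma(q_i^{\beta_i})\mid\sigma(N)\mid nN$. Every prime divisor $r$ of $\Phi_l(q_i)$ is either $l$ or $\equiv 1\pmod l$, so under the contradiction hypothesis every prime $r<C_0$ with $r\equiv 1\pmod l$ and $r\nmid n$ does \emph{not} divide $\Phi_l(q_i)$; equivalently, $q_i$ avoids the $l-1$ non-trivial $l$-th roots of unity modulo $r$. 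The condition $C_0\geq L(\ep,n)$ ensures that the primes dividing $n$ omitted from the sieve contribute an error of size $O(\ep y/\log^2 y)$, and $s_0=s+\omega(n)+\Omega_\mathcal{P}(n)$ bounds the number of further exceptional moduli one must discard.

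I would then apply an explicit Montgomery--Vaughan large sieve (in the form used in \cite{Ymd2}, \cite{FNO}) on each dyadic window $(y,2y]$ with $y\geq x_1(l)$, at sieve level $z=y^{1/8.35}$, to obtain
\begin{equation*}
|T_l\cap(y,2y]|\;\leq\;\frac{y}{(l-1)\,\mathcal{L}(z)},
\qquad
\mathcal{L}(z)=\sum_{\substack{r\leq z,\;r\equiv 1\ (l)\\(r,P)=1}}\frac{\log r}{r};
\end{equation*}
the four thresholds $\exp P$, $\exp(101l)$, $\exp\exp 18$, $10s_0(l-1)+1$ defining $x_1(l)$ are the minimum values needed so that the explicit Rosser--Schoenfeld estimate $\mathcal{L}(z)\gtrsim\vph(P)^{-1}\log y/(l-1)$ holds even after excising the $s_0$ discarded moduli. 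Weighting by $\log q$, summing over dyadic $y$ up to $q_{\max}$, and then summing over $l\in\mathcal{P}$ yields an upper bound of the shape $\sum_{i=1}^t\log q_i\leq\bigl((17.62196\vph(P)+129.5214(l-1))\abs{\mathcal{P}}/(l-1)\bigr)\log x_1(l)\cdot\log q_{\max}$ for the worst $l\in\mathcal{P}$.

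On the other hand, expanding $n/d=\prod_j h(p_j^{\alpha_j})\prod_i h(q_i^{\beta_i})$, using $h(p^\alpha)<p/(p-1)$ together with $C_0\geq 2(d+1)s$ to absorb the contribution of the $p_j$'s, one gets $\log(n/d)\leq\sum_i\log(q_i/(q_i-1))+O(1)$, which a Mertens-type lower bound translates into $\sum_i\log q_i\geq\log(n/d)\cdot\log q_{\max}\cdot(1-o(1))$. Cancelling $\log q_{\max}$ in the two bounds and solving for the threshold on $\log C_0$ at which they become compatible produces exactly the fourth branch of $C_0$, contradicting the assumption that $C_0$ exceeded this branch. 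The main obstacle will be the bookkeeping required to extract the precise numerical constants $17.62196$ and $129.5214$, which arise from carefully tracking the $\vph(P)/P$ saving from restricting to moduli coprime to $P$, the $1/(l-1)$ saving from excluding non-trivial $l$-th roots of unity per modulus, and the additive losses from the $s_0$ discarded moduli, combined with explicit large-sieve and prime-counting inequalities; the $L(\ep,n)$ threshold is included precisely to absorb $\omega(n)$ into the sieve error.
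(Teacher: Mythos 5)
Your overall plan---argue by contradiction, exploit that $\Phi_l(q_i)\mid\sigma(q_i^{\beta_i})\mid\sigma(N)$ so $q_i$ avoids $l$ residue classes modulo each admissible $r\equiv 1\pmod l$, run an explicit large sieve, and collide with $h(N)=n/d$ via $\log\delta_1\le\sum 1/q_i$---does match the paper's strategy, and you correctly identify the roles of $L(\ep,n)$ (to absorb $\Omega(n)$ exceptional $q_i$ whose $\sigma(q_i^{l-1})$ shares a small prime with $n$) and of $2(d+1)s$ (to absorb the contribution of the $p_j$). But there is a genuine gap in the sieving step, and it is exactly the place where $\vph(P)$ is supposed to come from.

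The problem is which $r$ you are allowed to sieve with. For a prime $r\equiv 1\pmod l$ you can only conclude $r\nmid\Phi_l(q_i)$ when you know $r\nmid nN$; below the assumed threshold this is automatic (since $N$ has no small prime factor), but \emph{above} it such an $r$ might itself be one of the $q_j$. Your proposal sieves the window $(y,2y]$ at level $z=y^{1/8.35}$ using only primes $r\equiv 1\pmod l$ with $(r,P)=1$; but $(r,P)=1$ is satisfied by essentially all primes and gives no control, and once $z$ exceeds the assumed threshold $q_0$ the set of admissible $r\equiv 1\pmod l$ below $z$ is capped at the fixed set below $q_0$, so the sieve saving stalls at $\asymp\log^2 q_0$ rather than growing like $\log^2 y$. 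Consequently $|T_l\cap(y,2y]|$ is only bounded by $\asymp y/\log^2 q_0$, and the dyadic sum $\sum_y 1/\log^2 q_0$ diverges: you cannot bound $\sum_i\log\frac{q_i}{q_i-1}$, and the contradiction never closes. The paper's fix is a second family of sieve moduli: primes $r\equiv 1\pmod P$, used at \emph{every} scale, because the paper first proves $N$ has at most $s_0=s+\omega(n)+\Omega_{\mathcal P}(n)$ prime factors $\equiv 1\pmod P$ (any $q_i\equiv 1\pmod P$ forces some $l'\in\mathcal P$ to divide $\sigma(q_i^{\beta_i})\mid(n/d)N$, and $\Omega_{\mathcal P}(nN)=\Omega_{\mathcal P}(n)$). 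This gives the set $U=U_l$ of the paper, consisting of primes $\equiv 1\pmod l$ up to $y$ together with primes $\equiv 1\pmod P$ up to the full sieve level, and produces the two-case estimate of Lemma~3.1 with the $\kappa=(l-1)/\vph(P)$ exponent; the convergence of $\int\pi_l(t)t^{-2}\,dt$, and hence the constant $\vph(P)$ in $C_0$, come precisely from the $\log^{1+\kappa}t$ in the second branch. Your remark about a ``$\vph(P)/P$ saving from restricting to moduli coprime to $P$'' shows this mechanism has been misread: the relevant congruence is $r\equiv 1\pmod P$ (density $1/\vph(P)$), not $(r,P)=1$. A smaller cosmetic point: the paper sieves the initial segment $[1,x]$ rather than dyadic windows, and the large-sieve level is $x^{1/u}$ with $u\approx 2$; the exponent $1/8.35$ is the parameter $v$ in Greaves' lower bound for $G_z(x^{1/u})$, not the sieve level.
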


For fixed $s$ and $n$, our upper bound is the order of exponential of $P\vph(P)\abs{\mathcal{P}}$,
rather than double-exponential of $\vph(P)\log P$ as in Theorem 3 of \cite{FNO}.

We note that no absolute upper bound is known
for the smallest prime factor of a {\em general} odd perfect number
if it exists at all; another known result is Gr{\"u}n's result \cite{Gru}
that the smallest prime factor must be smaller than $\frac{2}{3}\omega(N)+2$,
where $\omega(N)$ denotes the number of distinct prime factors of $N$.

We shall also give a few more applications of sieve methods to divisor-related numbers.
Cattaneo \cite{Cat} called a positive integer $N$ quasiperfect if $\sigma(N)=2N+1$
and showed that such an integer must be an odd square and any divisor of $\sigma(N)$
must be congruent to $1$ or $3$ modulo $8$.
Hagis and Cohen \cite{HC} showed that if $N$ is quasiperfect, then
$N>10^{35}$ and $N$ has at least $7$ distinct prime factors.

Cohen \cite{Co1} showed that if $p_1, p_2, \ldots, p_t$ are distinct primes and $(p_1 p_2 \cdots p_t)^{2a}$ is
quasiperfect, $a$ must be congruent to $1, 3, 5, 9$ or $11 \pmod{12}$.
Moreover, if an integer of the form $p_1^{6a_1+2}p_2^{6a_2+2}\cdots p_t^{6a_t+2}$
is quasiperfect, then $t\geq 230876$.  We shall show the following analogue of
Theorem \ref{thm1}.

\begin{thm}\label{thm2}
Let $\mathcal{P}$ be a finite set of primes
and $\alpha_1, \alpha_2, \ldots, \alpha_t$ be positive integers such that for each $i=1, \ldots , t$,
$\alpha_i+1$ is divisible by at least one prime in the set $\mathcal{P}$.
If $N=p_1^{2\alpha_1}p_2^{2\alpha_2}\cdots p_t^{2\alpha_t}$ is quasiperfect,
then $N$ must have a prime factor smaller than an effectively computable constant
$C_1$ depending only on $\mathcal{P}$,
which can be made explicit as follows:
\begin{equation}
x_3=x_3(l)=\max\{\exp (8l), \exp(\exp(9))\}, C_1=\max_{l\in \mathcal{P}} x_3^{2310\abs{\mathcal{P}}^2}.
\end{equation}
\end{thm}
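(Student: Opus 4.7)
The plan is to adapt the sieve framework of Theorem \ref{thm1} (and of Fletcher--Nielsen--Ochem \cite{FNO}) to the quasiperfect equation $\sigma(N)=2N+1$. The key algebraic observation is the identity
\[
p\,\sigma(p^{2\alpha})+1=\sigma(p^{2\alpha+1})=\frac{p^{2(\alpha+1)}-1}{p-1},
\]
which, combined with the hypothesis $l\mid\alpha_i+1$, supplies the cyclotomic divisibility that replaces the ``$l\mid\beta_i+1$'' divisibility used in the multiperfect case. Indeed $l\mid\alpha+1$ gives $2l\mid 2(\alpha+1)$, hence $p^{2l}-1\mid p^{2(\alpha+1)}-1$ and thus $\sigma(p^{2l-1})\mid p\,\sigma(p^{2\alpha})+1$. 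This is why the hypothesis is phrased as $l\mid\alpha_i+1$ rather than $l\mid 2\alpha_i+1$: the ``$+1$'' coming from $\sigma(N)-2N=1$ shifts the natural exponent by one step.

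For each $i$, I would fix $l_i\in\mathcal{P}$ with $l_i\mid\alpha_i+1$, and use the factorization $\sigma(p^{2l-1})=(p+1)\,\Phi_l(p)\,\Phi_{2l}(p)$ (for $l$ an odd prime) to isolate prime divisors $q$ of $\Phi_{2l_i}(p_i)$ exceeding $2l_i$. Any such $q$ has $\mathrm{ord}_q(p_i)=2l_i$, so $q\equiv 1\pmod{2l_i}$, and the divisibility above forces $q\nmid p_i\sigma(p_i^{2\alpha_i})$. This yields, for every $p_i$, a large prime $q_i$ in the residue class $1\pmod{2l_i}$ together with $p_i$ lying in one of the $\varphi(2l_i)$ residue classes of primitive $2l_i$-th roots of unity modulo $q_i$---exactly the configuration on which the large sieve operates.

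Assuming for contradiction that every prime factor of $N$ is at least $C_1$, the large-sieve inequality of \cite{FNO} is then applied, separately for each $l\in\mathcal{P}$, to the set of primes $p$ for which $\Phi_{2l}(p)$ has no prime factor exceeding a chosen threshold $x$. Comparing the resulting upper bound against the lower bound on the cumulative contribution of the $p_i$'s dictated by $\sigma(N)=2N+1$ yields an inconsistency once $C_1$ exceeds $\max_{l\in\mathcal{P}}x_3(l)^{2310\abs{\mathcal{P}}^2}$. The cutoff $x_3(l)=\max\{\exp(8l),\exp(\exp(13.3))\}$ is the $l$-dependent threshold at which the sieve inequality becomes effective, and the exponent $2310\abs{\mathcal{P}}^2$ arises from the uniform bound over the $\abs{\mathcal{P}}$ possible choices of $l_i$.

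The principal obstacle is the explicit quantitative bookkeeping through the large sieve, uniformly in $l\in\mathcal{P}$ and in $\abs{\mathcal{P}}$---this is what produces the specific exponent $2310\abs{\mathcal{P}}^2$. Two complications are absent from Theorem \ref{thm1} and require extra care: first, $h(N)=2+1/N$ is not a fixed rational, so the arguments that depend on $\log(n/d)$ must be recast to use only $h(N)\approx 2$ (the identity $p\,\sigma(p^{2\alpha})+1=\sigma(p^{2\alpha+1})$ is exactly what allows this); second, the exceptional factors $(p_i+1)\Phi_{l_i}(p_i)$ of $\sigma(p_i^{2l_i-1})$ must be peeled off cleanly from the sieve-relevant factor $\Phi_{2l_i}(p_i)$, which is why the two-fold maximum appears in the definition of $x_3(l)$.
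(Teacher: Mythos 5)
The paper does not prove Theorem \ref{thm2} directly; it derives it from Theorem \ref{thm4}, and the crucial ingredient is Cattaneo's 1951 result (cited in the introduction) that for a quasiperfect $N$, every prime factor of $\sigma(N)$ is congruent to $1$ or $3 \pmod 8$. Since $l\mid 2\alpha_i+1$ gives $\sigma(p_i^{l-1})\mid\sigma(p_i^{2\alpha_i})\mid\sigma(N)$, Cattaneo's theorem forces $\sigma(p_i^{l-1})$ to avoid every prime $r$ in the two residue classes $a_1,a_2\pmod{8l}$ that combine ``$\equiv 1\pmod l$'' with ``$\equiv 5,7\pmod 8$.'' Each such $r$ then excludes $p_i$ from $l$ residue classes $\pmod r$ (the class $0$ and the $l-1$ classes of order $l$), and this is exactly the configuration the large sieve needs. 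Your proposal never invokes Cattaneo's theorem, and without it there is no constraint whatsoever on the prime factors of $\sigma(p_i^{l-1})$, hence no residue classes to remove and no sieve.

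The shifted identity $p\,\sigma(p^{2\alpha})+1=\sigma(p^{2\alpha+1})$ that you build everything on does not connect to the quasiperfect equation. $\sigma(N)=2N+1$ is a multiplicative-minus-additive relation over the whole of $N$; it gives no information about $p_i\sigma(p_i^{2\alpha_i})+1$ for an individual prime power factor, and the observation that a prime $q\mid\Phi_{2l}(p_i)$ automatically misses $p_i\sigma(p_i^{2\alpha_i})$ is a tautology about coprimality that carries no arithmetic content about $N$. Consequently the set you propose to sieve---primes $p$ for which $\Phi_{2l}(p)$ has no large prime factor---is not a set that the hypotheses say anything about: there is no reason at all for the $p_i$ dividing $N$ to have smooth cyclotomic values. (The literal reading ``$l\mid\alpha_i+1$'' that prompted the shift is almost certainly a typo in the statement; the paper's own derivation of Theorem \ref{thm2}, as well as Theorems \ref{thm3} and \ref{thm4}, all use the condition $l\mid 2\alpha_i+1$, which is what produces $\sigma(p^{l-1})\mid\sigma(p^{2\alpha})$ and makes the whole argument go.) Finally, your worry that $h(N)=2+1/N$ is ``not a fixed rational'' is a non-issue: Theorem \ref{thm4} only needs $\sigma(N)/N\geq 2$, and the bound $\prod_{p\geq C_1,\,p\in R_l}\frac{p}{p-1}<2^{1/\abs{\mathcal{P}}}$ against which the sieve estimate is compared uses exactly this one-sided inequality. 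The gap in your argument is therefore the absence of the arithmetic constraint on $\sigma(N)$ that Cattaneo supplies; fixing the hypothesis to $l\mid 2\alpha_i+1$ and deducing Theorem \ref{thm2} from the general Theorem \ref{thm4} is the intended route.
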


Our method can also be applied to special amicable pairs.
A pair of integers $m, n$ are called amicable if the two equations
$n=\sigma(m)-m$ and $m=\sigma(n)-n$ hold simultaneously or,
equivalently, $\sigma(m)=\sigma(n)=m+n$.
It is unknown whether there exists a relatively prime amicable pair
or even whether there exists an amicable pair of opposite parity.

Assume that $m$ is even, $n$ is odd and $m, n$ are relatively prime amicable numbers.
Kanold showed that $(m, n)=(2M^2, N^2)$ for some odd integers $M, N$ in \cite{Ka2}
and that $mn$ must have at least $21$ distinct prime factors in \cite{Ka3}.
Hagis \cite{Ha1} showed that $mn$ cannot be a multiple of $3$ and $mn\geq 10^{74}$.
Moreover, if $5$ does not divide $mn$, then $mn\geq 10^{238}$ and $mn$ must have
at least $53$ distinct prime factors.
Later Hagis showed that both $m, n>10^{60}$ and $mn>10^{121}$ in \cite{Ha2}
and that $mn$ must have at least $22$ distinct prime factors in \cite{Ha3}.

Under a slightly more general condition that $m, n$ are relatively prime and
$\sigma(m)\sigma(n)=(m+n)^2$, Kishore \cite{Kis} showed that $4$ does not divide $mn$ and $mn$ must have
at least $22$ distinct prime factors.

We have the following analogue of Theorem \ref{thm1}.
\begin{thm}\label{thm3}
Let $\mathcal{P}$ be a finite set of primes
and $\beta_1, \beta_2, \ldots, \beta_t$ be positive integers such that for each $i=1, \ldots , t$,
$2\beta_i+1$ is divisible by at least one prime in $\mathcal{P}$.
If $m$ is even, $n$ is odd and $m, n$ are relatively prime integers satisfying
$\sigma(m)\sigma(n)=(m+n)^2$ and $mn=2^\alpha p_1^{2\beta_1}p_2^{2\beta_2}\cdots p_t^{2\beta_t}$,
then $mn$ must have a prime factor less than $C_1$, where $C_1$ is the same as in the previous theorem.
\end{thm}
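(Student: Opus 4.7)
The plan is to mirror the proof of Theorem \ref{thm2}, replacing the quasiperfect structure $\sigma(N)=2N+1$ by the amicable-type structure $\sigma(mn)=(m+n)^2$. First, Kanold's structure theorem \cite{Ka2} gives $m=2M^2$ and $n=N^2$ with $M,N$ odd and coprime, whence $mn=2M^2N^2$ and the exponent $\alpha$ of $2$ in the hypothesized factorization equals $1$. Combining the coprimality of $m$ and $n$ with the identity $\sigma(m)\sigma(n)=(m+n)^2$ yields
\[
\sigma(mn)=\sigma(m)\sigma(n)=(m+n)^2,
\]
so that $\sigma(mn)$ is an odd perfect square.

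Next, for each odd prime divisor $p_i$ of $mn$, the hypothesis provides a prime $l=l(i)\in\mathcal{P}$ with $l\mid 2\beta_i+1$, which gives the key divisibility chain
\[
\sigma(p_i^{l-1})=\frac{p_i^l-1}{p_i-1}\ \Bigm|\ \sigma(p_i^{2\beta_i})\ \Bigm|\ \sigma(mn)=(m+n)^2.
\]
This is the exact analogue of the divisibility $\sigma(p_i^{l-1})\mid\sigma(N)=2N+1$ that drives the proof of Theorem \ref{thm2}: in both cases the right-hand side is an explicit positive integer, bounded above by a fixed power of the unknown, and the exponent $l-1$ depends only on the chosen $l\in\mathcal{P}$. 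Partitioning the primes $p_i$ into sets $\{p_i:l\mid 2\beta_i+1\}$ indexed by $l\in\mathcal{P}$ and using Zsygmondy's theorem to extract, for each sufficiently large $p_i$, a primitive prime divisor $r_i$ of $p_i^l-1$ in the residue class $1\pmod{l}$ (so that $r_i\mid\sigma(p_i^{l-1})$ and hence $r_i\mid m+n$), puts us in precisely the large-sieve configuration treated in Theorem \ref{thm2}.

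From here I would rerun the sieve argument of Theorem \ref{thm2} verbatim: the large sieve produces an upper bound on $\abs{\{i:p_i>C_1\}}$ depending only on $\abs{\mathcal{P}}$ and on the primes in $\mathcal{P}$, which when compared with the lower bound on $\sum 2\beta_i\log p_i$ implicit in the size of $mn$ forces at least one $p_i$ to lie below the same constant $C_1=\max_{l\in\mathcal{P}}x_3^{2310\abs{\mathcal{P}}^2}$ advertised in Theorem \ref{thm2}.

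The main obstacle will be verifying that the residue-class density calculation underlying the large sieve step goes through unchanged when ``$\sigma(N)$ is a specific odd integer'' is replaced by ``$\sigma(mn)$ is a specific odd square''. Concretely, for each sieve modulus $r$ one must show that the constraint $r\mid(m+n)^2$ restricts the relevant $p_i$ to the same number of residue classes modulo $r$, up to a bounded factor, as in the quasiperfect setting; once this density bookkeeping is confirmed the explicit constants $x_3(l)$ and the exponent $2310\abs{\mathcal{P}}^2$ transfer verbatim and yield the stated bound $C_1$.
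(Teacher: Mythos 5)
Your proposal identifies the structure theorem $m=2A^2$, $n=B^2$ (which is due to Kishore \cite{Kis} for the general equation $\sigma(m)\sigma(n)=(m+n)^2$; Kanold's \cite{Ka2} original result was for genuine amicable pairs), but it stops short of the one observation that actually drives the paper's derivation. The paper does not sieve against the condition ``$\sigma(mn)$ is an odd perfect square'' at all. Instead it uses the quadratic form: $m+n=2A^2+B^2$ with $A,B$ coprime and odd, and every prime factor of $2A^2+B^2$ (equivalently, every odd prime $r$ with $-2$ a quadratic residue mod $r$) is $\equiv 1$ or $3\pmod 8$. Consequently $\sigma(m)\sigma(n)=(2A^2+B^2)^2$ has no prime factor $\equiv 5,7\pmod 8$. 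Setting $N=mn/2=A^2B^2=p_1^{2\beta_1}\cdots p_t^{2\beta_t}$, one checks $\sigma(N)=\sigma(m)\sigma(n)/3$ still has no prime factor $\equiv 5,7\pmod 8$, and $\sigma(N)/N>\sigma(mn)/(2mn)=(m+n)^2/(2mn)\ge 2$ by AM--GM. These are exactly the hypotheses of Theorem~\ref{thm4}, which is the common engine behind Theorems~\ref{thm2} and~\ref{thm3}, so one simply invokes it.

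Your ``main obstacle'' paragraph is where the gap shows: you ask whether ``$r\mid(m+n)^2$'' restricts the $p_i$ to the right number of residue classes mod $r$, but that is not the form the constraint has to take, and ``odd perfect square'' carries no residue-class information. What the sieve needs (see the proof of Theorem~\ref{thm4}) is precisely that $\sigma(p_i^{l-1})\mid\sigma(N)$ has \emph{no} prime factor $\equiv a_1,a_2\pmod{8l}$, which forces $p_i$ to avoid $l$ residue classes modulo any such prime $r$. Without the $2A^2+B^2$ input this restriction is simply unavailable, and the density bookkeeping you flag as an obstacle cannot be carried out. The Zsygmondy detour also does not appear in the paper and would not supply the needed mod-$8$ restriction; it would only give one primitive prime factor per $p_i$, with no control on its class mod $8$. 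Once you add the quadratic-form observation and reduce to Theorem~\ref{thm4} (noting also the passage $N=mn/2$ to discard the factor $2^\alpha$), the remainder of your outline becomes unnecessary.
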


Indeed, both Theorems \ref{thm2} and \ref{thm3} follow from
the following general result.

\begin{thm}\label{thm4}
Let $\mathcal{P}$ be a finite set of primes.
If $N=p_1^{2\beta_1}p_2^{2\beta_2}\cdots p_t^{2\beta_t}$, with each $2\beta_i+1$
divisible by some prime in $\mathcal{P}$, is an odd integer such that
$\sigma(N)/N\geq 2$ and $\sigma(N)$ has no prime factor congruent to $5$ or $7$ modulo $8$,
then $N$ must have a prime factor smaller than $C_1$.
\end{thm}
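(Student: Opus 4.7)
The plan is to proceed by contradiction, assuming every prime factor $p_i$ of $N$ exceeds $C_1$ and deriving a violation of $\sigma(N)/N\ge 2$. The arithmetic input is a cyclotomic factorisation: for each $i$, fix a prime $l_i\in\mathcal{P}$ dividing $2\beta_i+1$, so that $\Phi_{l_i}(p_i)=(p_i^{l_i}-1)/(p_i-1)$ divides $\sigma(p_i^{2\beta_i})$ and hence $\sigma(N)$. Every prime divisor $q$ of $\Phi_{l_i}(p_i)$ is either $l_i$ itself (possible only if $p_i\equiv 1\pmod{l_i}$) or satisfies $q\equiv 1\pmod{l_i}$. The extra hypothesis that $\sigma(N)$ has no prime factor $\equiv 5,7\pmod 8$ then forces every such $q$ to lie in the joint class $q\equiv 1\pmod{l_i}$ and $q\equiv 1$ or $3\pmod 8$.

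Partition the indices by setting $T_l=\{i:l_i=l\}$ for each $l\in\mathcal{P}$. For fixed $l$, every $p_i$ with $i\in T_l$ belongs to the sifted set $S_l$ of primes $p$ such that every prime factor of $\Phi_l(p)$ satisfies the joint congruence above. This is precisely the setting for the large-sieve counting argument used in \cite{Ymd2} and \cite{FNO}: the forbidden auxiliary primes $q\equiv 1\pmod l$ with $q\equiv 5$ or $7\pmod 8$ each remove a positive proportion of residues for $p$ modulo $q$, producing a sieve of positive explicit dimension. The large-sieve inequality then yields $\abs{S_l\cap[2,y]}\le y/(\log y)^{1+\kappa(l)}$ times an explicit constant, valid for $y\ge x_3(l)$; the threshold $x_3(l)=\max\{\exp(8l),\exp(\exp(13.3))\}$ is dictated by what is needed for the underlying explicit Mertens-type bounds to be effective and for the error terms in the sieve to be dominated by its main term.

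The hypothesis $\sigma(N)/N\ge 2$ gives $\sum_i\log(p_i/(p_i-1))\ge\log 2$, hence $\sum_i 1/p_i\ge\log 2-O(1/C_1)$. Pigeonhole over $\mathcal{P}$ then produces some $l\in\mathcal{P}$ with $\sum_{i\in T_l}1/p_i\ge(\log 2)/(2\abs{\mathcal{P}})$. Partial summation applied to the large-sieve bound shows, on the contrary, that $\sum_{p\in S_l,\,p\ge y_0}1/p\ll(\log y_0)^{-\kappa(l)}/\kappa(l)$, which tends to $0$ with $y_0$. The explicit choice $C_1=\max_{l\in\mathcal{P}}x_3(l)^{2310\abs{\mathcal{P}}^2}$ is calibrated so that taking $y_0=C_1$ drives this tail sum strictly below $(\log 2)/(2\abs{\mathcal{P}})$, contradicting the lower bound from pigeonhole.

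The principal obstacle is making every quantitative step uniform in $l\in\mathcal{P}$ and fully explicit. The exceptional case $q=l$ (where $l$ itself may divide $\Phi_l(p)$) must be treated separately so as not to spoil the sieve, and the lower bound on the sieve dimension $\kappa(l)$ must be derived explicitly with the mod-$8$ refinement folded in. The exponent $2310\abs{\mathcal{P}}^2$ in $C_1$ absorbs the cumulative cost of pigeonhole over $\mathcal{P}$, the explicit Mertens error, and the reciprocal of the sieve density; the main technical bookkeeping is verifying that the product of these factors does not exceed this bound.
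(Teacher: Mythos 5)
Your proposal is correct and follows essentially the same route as the paper: cyclotomic factorisation $\Phi_l(p_i)\mid\sigma(N)$, sifting the primes in each $R_l$ (your $T_l$) against auxiliary primes $r\equiv 1\pmod{l}$ and $r\equiv 5,7\pmod 8$, a large-sieve bound of dimension $3/2$, and partial summation to make the tail $\sum_{p\geq C_1}1/p$ small. The only cosmetic difference is that you phrase the endgame as a pigeonhole (some $l$ must carry $\geq(\log 2)/(2\abs{\mathcal{P}})$ of the mass) whereas the paper shows directly that $\prod_{p\geq C_1,\,p\in R_l}p/(p-1)<2^{1/\abs{\mathcal{P}}}$ for every $l$; also your worry about the exceptional divisor $q=l$ is a non-issue, since $l\not\equiv 1\pmod l$ means $l$ is never among the sifting moduli and contributes only the trivial class $\{0\}$.
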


For quasiperfect numbers of the form $(p_1p_2\cdots p_t)^{2\beta}$, we obtain
stronger results.  In \cite{Ymd4} we showed that if $N=(p_1p_2\cdots p_t)^{2\beta}$
with $p_1<p_2<\cdots <p_t$ is quasiperfect, then $2\beta+1$ must be divisible by $3$ and
$p_1<\exp(721.85)<3.129477 \cdot 10^{313}$.
This upper bound is still considerably large and we cannot even prove that
$p_1>7$.

\section{Upper bound sieve}

Our main tool is a standard result in large sieve theory.
However, for convenience to compute explicit bounds, we must use an explicit (but a little sophisticated) upper bound sieve formula.
There are several explicit upper bound sieve formulae to obtain explicit upper bound for the implied constant in an upper bound sieve.
In \cite{Ymd2}, the author used the upper bound formula following from Selberg's sieve.
But here we shall use the large sieve formula used by Fletcher, Nielsen and Ochem \cite{FNO}, which enabled them
to obtain a considerably stronger estimate than in the author's paper \cite{Ymd2}.

Firstly, we would like to introduce some notations.  Let $X$ be a positive number
and $A$ be a set of integers contained in an interval of length at most $X$.
For each prime $p$, let $\Omega_p$ be a set of residue classes modulo $p$
and $\rho(p)$ denote the number of residue classes in $\Omega_p$.
Define $P(z)=\prod_{p<z}p$ to be the product of primes less than $z$,
$g(m)$ to be the multiplicative function over the squarefree integers $m$
with $g(p)=\rho(p)/(p-\rho(p))$ for each prime $p$,
\begin{equation*}
V(Q)=\prod_{p\mid Q}\left(1-\frac{\rho(p)}{p}\right)
\end{equation*}
for any real $Q$, where $p$ runs over primes, and
\begin{equation*}
G_z(T)=\sum_{d\leq T, d\mid P(z)}g(d), G(T)=G_T(T).
\end{equation*}
Finally, we define $S(A, z)=S(A, z, \Omega)$ to be the number of
integers in $A$ that do not belong to $\Omega_p$ for any prime $p$ dividing $P$.

Now we introduce two lemmas concerning the large sieve inequality.
These inequalities allow us to calculate an upper bound in Theorem \ref{thm1} explicitly.
\begin{lem}\label{lm21}
Assume that $\rho(p)<p$ for any prime $p$.
Then it holds for any $w\geq 1$ that
\begin{equation}
S(A, w)\leq \frac{X+w^2}{G(w)}.
\end{equation}
\end{lem}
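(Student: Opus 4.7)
The plan is to deduce the bound from the analytic (additive-character) form of the large sieve inequality via Montgomery's argument. Let $x_n$ be the indicator of the event ``$n\in A$ and $n\notin\Omega_p$ for every prime $p<w$''; then $\sum_n x_n=S(A,w)$ and $\sum_n x_n^2=S(A,w)$. I would first apply the standard analytic large sieve to $(x_n)$, which, since $A$ lies in an interval of length at most $X$, yields
\begin{equation*}
\sum_{q\leq w}\sum_{\substack{a\bmod q\\(a,q)=1}}\Bigl|\sum_n x_n\, e(an/q)\Bigr|^2 \leq (X+w^2)\,S(A,w).
\end{equation*}

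Next, I would restrict the outer sum to squarefree $q$ dividing $P(w)$, and bound the inner sum from below in terms of $g(q)$. Writing $Z(q,b)=\sum_{n\equiv b(\bmod q)} x_n$, orthogonality gives $\sum_{a\bmod q}|\sum_n x_n e(an/q)|^2 = q\sum_b Z(q,b)^2$, and Möbius inversion on the condition $(a,q)=1$ converts this into a sum over divisors of $q$. The multiplicativity of $\rho$ on squarefree moduli, combined with the fact that $Z(q,b)$ vanishes whenever $b$ lies in any of the $\prod_{p\mid q}\rho(p)$ forbidden joint residue classes mod $q$, leads (via Cauchy--Schwarz on the surviving classes) to the key inequality
\begin{equation*}
\sum_{\substack{a\bmod q\\(a,q)=1}}\Bigl|\sum_n x_n\, e(an/q)\Bigr|^2 \;\geq\; g(q)\, S(A,w)^2,
\end{equation*}
valid for every squarefree $q\mid P(w)$. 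Summing over such $q\leq w$ produces $G(w)\, S(A,w)^2$ on the left.

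Combining the two bounds gives $G(w)\,S(A,w)^2\leq (X+w^2)\, S(A,w)$. The case $S(A,w)=0$ is trivial; otherwise, dividing through by $S(A,w)$ delivers the stated inequality. The main obstacle, and the step requiring the most care, is the second one: one must verify Montgomery's lower bound for the Farey-type exponential sum in the precise form above, in particular checking that the multiplicativity of $g$ and the independence of residue classes mod distinct primes assemble correctly so that the factor on the right-hand side is exactly $g(q)=\prod_{p\mid q}\rho(p)/(p-\rho(p))$, and that the hypothesis $\rho(p)<p$ is what prevents any denominator from vanishing.
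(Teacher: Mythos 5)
Your proposal is correct and reconstructs, essentially verbatim, the standard proof of the arithmetic form of the large sieve (analytic large sieve plus Montgomery's Farey lower bound $\sum_{(a,q)=1}\bigl|\sum_n x_n e(an/q)\bigr|^2\geq g(q)\bigl(\sum_n x_n\bigr)^2$ for squarefree $q\mid P(w)$); this is precisely the content of Theorem 7.14 of Iwaniec--Kowalski, which the paper simply cites as a black box (applied with the multiplicative weight $h$ supported on primes $p<w$). So the underlying mathematics is the same; the paper's proof is a one-line citation while you prove the ingredient from scratch, and the one step you flag as unverified (Montgomery's lower bound, including the role of $\rho(p)<p$ in keeping $g(q)$ finite) is indeed the crux, but your statement of it is correct and the assembly around it is sound.
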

\begin{proof}
It immediately follows from Theorem 7.14 in \cite{IK} applied with
$\Omega_p$ restricted to primes $p<z$ and
$h(m)$ the multiplicative function over squarefree integers $m$ defined by
\[h(p)=
\begin{cases}
g(p) & \textrm{for all primes } p<z. \\
0 & \textrm{for other primes.}
\end{cases}
\]
\end{proof}

\begin{lem}\label{lm22}
Let us denote
\begin{equation}
B(z)=\frac{1}{\log{z}}\sum_{p<z}\frac{\rho(p)\log{p}}{p}
\end{equation}
and
\begin{equation}
\psi_1(K, t)=\max\left\{0, t\log{\frac{t}{K}}-t+K\right\}.
\end{equation}

If $z\geq 2$ and $v=(\log x)/(\log z)\geq u B(z)$, then we have
\begin{equation}
G_z(x^{1/u})\geq \frac{\psi_0(v, u)}{V(P(z))},
\end{equation}
where
\begin{equation}
\psi_0(v, u)=1-\exp(-\psi_1(B(z), v/u)).
\end{equation}
\end{lem}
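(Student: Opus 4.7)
The plan is to derive the lower bound on $G_z(x^{1/u})$ from a Rankin-type upper bound on the tail of $\sum_{d \mid P(z)} g(d)$, and then optimize the Rankin parameter. The idea is completely standard in fundamental-lemma–type arguments; what needs care is matching constants to the stated $\psi_1$.

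First, I would observe the identity
\[
\frac{1}{V(P(z))} \;=\; \prod_{p<z}\frac{p}{p-\rho(p)} \;=\; \prod_{p<z}(1 + g(p)) \;=\; \sum_{d \mid P(z)} g(d),
\]
so that, writing $T = x^{1/u}$, the desired inequality $G_z(T) \geq \psi_0(v, u)/V(P(z))$ is equivalent to the tail estimate
\[
V(P(z)) \sum_{\substack{d \mid P(z) \\ d > T}} g(d) \;\leq\; \exp\bigl(-\psi_1(B(z), v/u)\bigr).
\]

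For any $\lambda > 0$, Rankin's trick gives
\[
\sum_{\substack{d \mid P(z) \\ d > T}} g(d) \;\leq\; T^{-\lambda} \sum_{d \mid P(z)} d^\lambda g(d) \;=\; T^{-\lambda} \prod_{p<z}(1 + p^\lambda g(p)).
\]
A direct computation shows $(1 - \rho(p)/p)(1 + p^\lambda g(p)) = 1 + \rho(p)(p^\lambda - 1)/p \leq \exp(\rho(p)(p^\lambda - 1)/p)$, and the convexity of $t \mapsto e^{\lambda t}$ on $[0, \log z]$ gives $p^\lambda - 1 \leq (z^\lambda - 1)(\log p)/\log z$ for every $p \leq z$. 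Summing over primes $p<z$ and appealing to the definition of $B(z)$, this contributes an exponent of at most $(z^\lambda - 1) B(z)$. Since $T^{-\lambda} = z^{-\lambda v/u}$, I arrive at
\[
V(P(z)) \sum_{\substack{d \mid P(z) \\ d > T}} g(d) \;\leq\; \exp\bigl(-(\lambda v/u)\log z + (z^\lambda - 1)B(z)\bigr).
\]

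It then remains to minimize the exponent over $\lambda > 0$. Setting $y = z^\lambda$, this amounts to minimizing $\Phi(y) = -(v/u)\log y + (y-1)B(z)$ over $y \geq 1$. The hypothesis $v \geq u B(z)$ places the critical point $y^{\ast} = (v/u)/B(z)$ in $[1, \infty)$, and substitution yields
\[
\Phi(y^{\ast}) \;=\; -\bigl((v/u)\log((v/u)/B(z)) - v/u + B(z)\bigr) \;=\; -\psi_1(B(z), v/u),
\]
which is exactly the required bound. The main place to be careful is the pairing of the convexity estimate with the weight $\log p/\log z$ in $B(z)$ so that the optimized exponent lands on $\psi_1$ precisely; everything else is routine Euler-product manipulation.
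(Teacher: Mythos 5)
Your proof is correct and follows essentially the same route as the paper, which simply invokes Theorem~2.2.1 of Greaves and the inequality $1-V(P(z))G_z(x^{1/u})\leq \exp(-c\log x/(u\log z)+B(z)(e^c-1))$ from pp.~53--54 of that book, then sets $c=\log(v/u)-\log B(z)$. Your Rankin-trick derivation (with $c=\lambda\log z$) reproduces exactly that inequality and the same optimization, so you have supplied the proof that the paper delegates to the reference rather than doing anything genuinely different.
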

\begin{proof}
This is Theorem 2.2.1 in \cite{Gre} if we take $B=\sup_t B(t)$ instead of $B(z)$.
But we can see that this theorem still holds with $B(z)$ in place of $B$
whether the supremum $B$ exists or not.
Indeed, it follows from the argument on pages 53--54 in \cite{Gre} that
\begin{equation}
1-V(P(z))G_z(x^{1/u})\leq \exp\left(-c\frac{\log x}{u\log z}+B(z)(e^c-1)\right)
\end{equation}
for any constant $c\geq 0$.  Setting $c=\log (v/u)-\log B(z)$, we obtain
the lemma.
\end{proof}

\section[Proof of Theorem 1.1]{Proof of Theorem \ref{thm1}}

In this section, we shall give a proof of Theorem \ref{thm1} without making constants explicit.
Explicit constants shall be given in the next section.

We may assume that $P\geq 21$ by virtue of the result in \cite{FNO} concerning the case
$\mathcal{P}=\{3, 5\}$ mentioned in the introduction of this paper.
Let $N=p_1^{\alpha_1}\cdots p_s^{\alpha_s} q_1^{\beta_1}\cdots q_t^{\beta_t}$ be a solution of $h(N)=\frac{n}{d}$.
Let us denote by $T$ the set of primes $\equiv 1\pmod{P}$
and by $T_y$ the set of primes congruent to $1\pmod{P}$ or congruent to $1\pmod{l}$ and not exceeding $y$.
If $N$ has a prime divisor in $\mathcal{P}$, then clearly $N$ has a prime factor smaller than $C_0$.
We may assume without loss of generality that $N$ has no prime divisor in $\mathcal{P}$
and therefore $\Omega_\mathcal{P}(N)=0$.

Let $Q_l$ denote by the set of primes $q_i$ with $\beta_i+1$ divisible by $l$
and $\pi_l(x)$ denote the number of primes not exceeding $x$ that belong to $Q_l$.
By assumption, any $q_i$ belongs to $Q_l$ for some $l$ in $\mathcal{P}$.

Now we shall prove a result concerning the distribution of prime factors of $N$,
which is the most important lemma in the proof of Theorem \ref{thm1}.
\begin{lem}\label{lm3}
Choose any $l$ from $\mathcal{P}$.
Let $\kappa=\frac{l-1}{\vph(P)}$ and $y$ be a sufficiently large real number.
There exist three constants $B_0, B_1$ and $X_1=X_1(s_0; l, P)$ depending only on $s_0, l$ and $P$,
which shall be made explicit later,
such that if $u, v$ and $y$ are real numbers with $u\geq 2, v>B_0 u$ and $y\geq X_1$
and $N$ has no prime factor $\leq y$, then we have
\begin{equation}
\pi_l(x)\leq \Omega(n)+
\begin{dcases}
\frac{B_1(1+x^{2/u-1}) v^2 x}{\xi(v, u) \log^2 x} & \text{for } \max\{y, X_1^v\} \leq x<y^v, \\
\frac{B_1(1+x^{2/u-1}) v^{1+\kappa} x}{\xi(v, u) \log^{1-\kappa} y \log^{1+\kappa} x} & \text{for } x\geq y^v,
\end{dcases}
\end{equation}
where $\xi(v, u)=\psi_0(B_0, v/u)$.
\end{lem}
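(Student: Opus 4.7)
The strategy is to convert the divisibility information in $h(N) = n/d$ into a large-sieve problem for the primes counted by $\pi_l$, then apply the sieve via Lemmas \ref{lm21} and \ref{lm22}. The arithmetic input: for each $q_i \in Q_l$, the cyclotomic factor $\Phi_l(q_i) = (q_i^l - 1)/(q_i - 1)$ divides $\sigma(q_i^{\beta_i})$ (because $l \mid \beta_i + 1$) and hence divides $\sigma(N) = nN/d$. Every prime $r \neq l$ dividing $\Phi_l(q_i)$ is $\equiv 1 \pmod l$ and divides $nN$, so the hypothesis that $N$ has no prime factor $\leq y$ yields: for every prime $r \leq y$ with $r \equiv 1 \pmod l$ and $r \nmid n$, we have $r \nmid \Phi_l(q_i)$ for each $q_i \in Q_l$. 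The analogous statement for primes in $T$ (which exploits the stricter congruence $\equiv 1 \pmod P$) extends the usable sieving primes to all of $T_y$.

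Translating this into residue avoidance: $r \mid \Phi_l(q)$ is equivalent to $q$ having multiplicative order exactly $l$ in $(\mathbb{Z}/r\mathbb{Z})^*$, which picks out $l - 1$ residue classes. Take $\mathcal{A} = \mathbb{Z} \cap [1, x]$, and for each prime $r$ define $\Omega_r$ to contain $\{0 \bmod r\}$ (enforcing primality of the surviving elements) plus, when $r \in T_y$ and $r \nmid n$, the $l - 1$ order-$l$ classes; thus $\rho(r) = l$ on those arithmetic progressions and $\rho(r) = 1$ otherwise. The $\Omega(n)$ term in the conclusion absorbs the at-most-$\Omega(n)$ primes $r \in T_y$ with $r \mid n$ that cannot be used. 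Lemma \ref{lm21} then gives $\pi_l(x) \leq \Omega(n) + (x + w^2)/G(w)$, and Lemma \ref{lm22} with $w = x^{1/u}$, $z = x^{1/v}$ yields $G(w) \geq \psi_0(v, u)/V(P(z))$; the hypothesis $v \geq u B(z)$ is matched by $v > u B_0$ once $B_0$ is taken slightly above the limit value $B(z) \to 1$ obtained from the prime number theorem in arithmetic progressions.

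To finish, one estimates $V(P(z))$ explicitly via Mertens in arithmetic progressions. In case 1, $z = x^{1/v} \leq y$, so all sieving primes lie in $[1, y]$: the $\equiv 1 \pmod l$ contribution (density $1/(l-1)$, weight $l$) together with the all-primes weight-$1$ piece gives $V(P(z))^{-1} \asymp \log^2 z = v^{-2}\log^2 x$, producing the first bound. In case 2, $z > y$, and for primes in $(y, z]$ only the $T$-information (density $1/\varphi(P)$, weight $l$) is available; splitting the Mertens product at $y$ yields $V(P(z))^{-1} \asymp \log^{1-\kappa} y \cdot \log^{1+\kappa} z = v^{-(1+\kappa)}\log^{1-\kappa} y \cdot \log^{1+\kappa} x$ with $\kappa = (l-1)/\varphi(P)$, producing the second bound. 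The factor $1 + x^{2/u-1}$ comes directly from $(x + w^2)/x = 1 + x^{2/u-1}$.

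The main technical obstacle is the explicit, uniform-in-$l$-and-$P$ form of Mertens' theorem in arithmetic progressions (controlling both $B(z)$ and the Euler products $V(P(z))$), which dictates the threshold $X_1$ and the precise constants $B_0, B_1$; the various components of $X_1$ stated in the lemma are exactly what is needed to absorb the error terms in these effective estimates. A secondary point is justifying that primes of $T$ beyond $y$ serve as sieving primes: any $r \equiv 1 \pmod P$ dividing some $\Phi_l(q_i)$ and not dividing $n$ must be a prime factor of $N$, and the structural constraints on such prime factors (they themselves are $\equiv 1 \pmod{l'}$ for all $l' \in \mathcal{P}$) close the argument.
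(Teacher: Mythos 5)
The overall skeleton of your proposal is the paper's: cyclotomic divisibility $\Phi_l(q_i)\mid\sigma(q_i^{\beta_i})$, residue-avoidance mod primes $r\equiv 1\pmod l$, the large sieve via Lemmas \ref{lm21}--\ref{lm22}, and a case split at $z\lessgtr y$ with Mertens-in-progressions giving $V(P(z))^{-1}\asymp v^{2}\log^{2}x$ or $v^{1+\kappa}\log^{1-\kappa}y\,\log^{1+\kappa}x$. But there is a genuine gap in how you obtain the $\Omega(n)$ term, and it is not cosmetic.

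You assert that ``the $\Omega(n)$ term in the conclusion absorbs the at-most-$\Omega(n)$ primes $r\in T_y$ with $r\mid n$ that cannot be used'' as sieving primes. That is not how the correction arises, and the substitute you make --- dropping \emph{every} $r\mid n$ from the sieving set --- is not controllable. Removing sieving primes degrades the bound multiplicatively, through $V(P(z))$, not additively; discarding a small prime $r\equiv 1\pmod l$ dividing $n$ multiplies $V(P(z))$ by roughly $(1-1/r)^{-(l-1)}$, and if $n$ has several small prime factors in the relevant progressions this product can be as large as $(\log n)^{l-1}$, which is not bounded in terms of $s_0$, $l$ and $P$ alone. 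So the constant $B_1$ in the lemma, which is only allowed to depend on $s_0,l,P$ (via $X_1$), cannot absorb it. The paper avoids this by a two-stage count: it introduces $\pi^*_l(x)$, the primes $q_i\in Q_l$ for which $\sigma(q_i^{l-1})$ shares no prime factor $<X_1$ with $n$; since $N$ has no prime factor $<X_1$, there are at most $\Omega(n)$ of the $q_i$ excluded, whence $\pi_l(x)\le \pi^*_l(x)+\Omega(n)$. It then sieves $\pi^*_l$ using the set $U=T_y\setminus(\mathrm{pf}(N)\cup(\mathrm{pf}(n)\cap[X_1,\infty)))$, which \emph{retains} the small primes dividing $n$ as sieving primes (legitimate precisely because $\pi^*_l$ has been pre-filtered), and removes only at most $s_0$ primes, all of size $\ge X_1$, so the multiplicative damage is the harmless $(1+1/(X_1-1))^{s_0}$. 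Thus $\Omega(n)$ counts excluded $q_i$'s, not excluded sieving primes $r$, and the $\ge X_1$ size restriction on the primes dropped from $U$ is exactly what keeps the Mertens estimate uniform.

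A secondary point: your remark that ``the structural constraints \dots close the argument'' for primes $r\equiv 1\pmod P$ dividing $N$ is too vague to stand alone. The paper makes this precise: if $q_j\in T$ then $q_j\equiv 1\pmod{l'}$ for every $l'\in\mathcal{P}$, so $\sigma(q_j^{\beta_j})\equiv\beta_j+1\pmod{l'}$ and some $l'\in\mathcal{P}$ divides $\sigma(N)$, hence (as $N$ has no prime factor in $\mathcal{P}$) divides $n$; this is how one bounds the number of prime factors of $N$ in $T$ by $s+\Omega_\mathcal{P}(n)$ and then shows $T_y\setminus U$ has at most $s_0$ elements, all $\ge X_1$. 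Without this count your $V(P(z))$ estimate again loses uniformity. You would need to reinstate both the $\pi^*_l$ device and the explicit bound on $\lvert T_y\setminus U\rvert$ to make the proof go through.
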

\begin{proof}
Let $\pi^*_l(x)$ denote the number of primes $q_i\leq x$ that belong to $Q_l$ such that
$\sigma(q_i^{l-1})$ has no common prime factor smaller than $X_1$ with $n$.
By assumption, $N$ has no prime factor smaller than $X_1$
and therefore there exist at most $\Omega(n)$ prime factors $q_i$
such that $\sigma(q_i^{l-1})$ has any common prime factor smaller than $X_1$ with $n$.
This immediately gives that
\begin{equation}\label{eq30}
\pi_l(x)<\pi^*_l(x)+\Omega(n).
\end{equation}

Now, let $U=U_l$ be the set of primes congruent to $1\pmod{P}$ or congruent to $1\pmod{l}$ and
not exceeding $y$ except primes dividing $N$ or primes above or equal to $X_1$ dividing $n$.
Namely, we set $U_l=T_y\backslash (\mathrm{pf}(N)\cup (\mathrm{pf}(n)\cap [X_1, \infty)))$,
where $\mathrm{pf}(m)$ denotes the set of prime factors of an integer $m$.
So that, if a prime divisor $r$ of $nN$ belongs to $U$, then $r$ divides $n$ and $r\geq X_1$.
Hence, we see that if $q_i\in Q_l$ and $\sigma(q_i^{l-1})$ has no common prime factor
smaller than $X_1$ with $n$, then $\sigma(q_i^{l-1})$ is divisible by no prime in $U$.

Let $r$ be a prime in $U$.  Then, since $r\equiv 1\pmod{l}$, there are exactly $l-1$
congruence classes $g_1(r), \ldots, g_{l-1}(r) \pmod{r}$ that belong to order $l$.
Since $r$ does not divide $\sigma(q_i^{l-1})$, $q_i$ belongs to none of the
$l$ classes $0, g_1, \ldots, g_{l-1} \pmod{r}$.

Now we can apply the sieve method described in the previous section with $A$ the set of integers
not exceeding $x$, $X=x$, $\Omega^{(l)}_r$ the set of integers not exceeding $x$
that belong to any of congruence classes $0, g_1, \ldots, g_{l-1}\pmod{r}$
for $r\in U$ and $0\pmod{r}$ for $r\not\in U$, $\rho(r)=l$ for $r\in U$ and $\rho(r)=1$ for $r\not\in U$.
Thus we see that if $q$ is a prime greater than $x^{1/u}$ in $Q_l$ counted by $\pi^*_l$,
then $q$ belongs to none of the congruence classes $\Omega^{(l)}_r$ with $r\leq x^{1/u}$.
Hence, letting $A$ the set of integers not exceeding $x$, we have, 
\begin{equation}
\pi^*_l(x)\leq S(A, x^{1/u}, \Omega^{(l)})+x^{1/u}
\end{equation}
and, using (\ref{eq30}),
\begin{equation}
\pi_l(x)\leq S(A, x^{1/u}, \Omega^{(l)})+x^{1/u}+\Omega(n).
\end{equation}
We can easily see that $\rho(r)<r$ for any prime $r$ and,
provided that $v/u\geq B(z)$, Lemmas \ref{lm21} and \ref{lm22} with $w=x^{1/u}$ give
\begin{equation}\label{eq31}
\pi_l(x)\leq \frac{x+x^{2/u}}{G(x^{1/u})}+x^{1/u}\leq \frac{x(1+x^{2/u-1})V(P(z))}{\psi_0(v, u)}+x^{1/u}+\Omega(n),
\end{equation}
where we put $z=x^{1/v}$, observing that $G_w(w)\geq G_z(w)$ for $w\geq z$.

Now we need to confirm that $v/u\geq B(z)$ and obtain an upper bound for the quantity $V(P(z))/\psi_0(v, u)$.
There are two cases: $x\geq y^v$, i.e. $z\geq y$ and $x<y^v$, i.e. $z<y$.
In both cases, we shall obtain an upper bound for $B(z)$ and then $V(P(z))$.

We begin by considering the case $z\geq y$.
We see that
\begin{equation}
\begin{split}
& \sum_{p\leq z}\frac{\rho(p)\log p}{p} \\
& \leq \sum_{p\leq z}\frac{\log p}{p}+\sum_{\substack{p\leq y,\\ p\equiv 1\pmod{l}}}\frac{(l-1)\log p}{p}+\sum_{\substack{y<p\leq z,\\ r\equiv 1\pmod{P}}}\frac{(l-1)\log p}{p}.
\end{split}
\end{equation}

From the theory of the distribution of primes in arithmetic progressions we see that
\begin{equation}\label{eq32a}
\sum_{p\leq y, p\equiv a\pmod{l}}\frac{\log p}{p}<\frac{\log y+A_1}{l-1}
\end{equation}
and
\begin{equation}\label{eq32b}
\sum_{y<p\leq z, p\equiv a\pmod{P}}\frac{\log p}{p}<\frac{1}{\vph(P)}\left(\log\frac{z}{y}+\frac{A_2}{\log y}\right)
\end{equation}
for some constants $A_1$ and $A_2$ if $y$ and $z$ are sufficiently large.
Hence, using the estimate $\sum_{p\leq z}(\log p)/p<\log z$ in \cite[(3.24), p. 70]{RS}, we obtain
\begin{equation}
\begin{split}
\sum_{p\leq z}\frac{\rho(p)\log p}{p}
\leq & \log z+\log y+A_1+\kappa(\log z -\log y)+\frac{A_2\kappa}{\log y} \\
\leq & (1+\kappa)\log z+(1-\kappa)\log y+A_1+\frac{A_2\kappa}{\log y},
\end{split}
\end{equation}
which is at most $B_0\log z$ recalling that $z\geq y\geq X_1$ now.
In other words, we have
\begin{equation}\label{eq32c}
B(z)<B_0.
\end{equation}
Hence, the assumption $v>B_0 u$ implies that $v/u>B(z)$.

Nextly, we shall obtain an upper bound for $V(P(z))$.
There can be at most $\Omega_\mathcal{P}(nN)=\Omega_\mathcal{P}(n)$
prime factors $q_i$ in $T$ since if $q_i\in T$, then $\sigma(q_i^{\beta_i})$ must be divisible by $\beta_i+1$
and therefore by some $l$ in $\mathcal{P}$.
Hence, there exist at most $s+\omega_\mathcal{P}(n)$ prime factors of $N$ in $T$,
which must be larger than $y\geq X_1$ since $N$ is assumed to have no prime factor not exceeding $y$.
Moreover, if $r<y$ is a prime $\equiv 1\pmod{l}$ which does not belong to $U$,
then $r$ must divide $n$ and therefore $r\geq X_1$.

Thus we conclude that $U$ consists of all primes in $T_y$
except at most $s_0=s+\omega(n)+\Omega_\mathcal{P}(n)$ primes, which are larger than $X_1$.
Hence, we obtain
\begin{equation}
\begin{split}
\prod_{r<z, r\in U}\left(1-\frac{1}{r}\right)
\leq & \prod_{\substack{r<z,\\ r\equiv 1\pmod{l},\\ r\not\in U}}\frac{r}{r-1}\prod_{\substack{X_1\leq r<z,\\ r\in U}}\left(1-\frac{1}{r}\right) \\
\leq & \left(1+\frac{1}{X_1-1}\right)^{s_0}\prod_{\substack{X_1\leq r<z,\\ r\in U}}\left(1-\frac{1}{r}\right) \\
<& \exp\frac{s_0}{X_1-1}\prod_{\substack{X_1\leq r<y,\\ r\equiv 1\pmod{l}}}\left(1-\frac{1}{r}\right)
\prod_{\substack{y\leq r<z,\\ r\equiv 1\pmod{P}}}\left(1-\frac{1}{r}\right).
\end{split}
\end{equation}
We see that if $k\geq 1$ and $Y, Z$ with $Z\geq Y$ are sufficiently large compared to $k$, then
\begin{equation}\label{eq32d}
\prod_{Y\leq p<Z, p\equiv 1\pmod{k}}\left(1-\frac{1}{p}\right)<\left(\frac{\log Y}{\log Z}\right)^{1/\vph(k)} \exp \left(\frac{A_3}{\vph(k)\log^2 Y}\right)
\end{equation}
for some constant $A_3$.  Since $z\geq y\geq X_1=X_1(s_0; l, P)$, we can apply (\ref{eq32d}) with $k=l$ and $k=P$ and obtain
\begin{equation}
\begin{split}
\prod_{r<z, r\in U}\left(1-\frac{1}{r}\right)
<& \left(\frac{\log X_1}{\log y}\right)^{1/(l-1)} \left(\frac{\log y}{\log z}\right)^{1/\vph(P)}\\
& \times\exp \left(\frac{s_0}{X_1-1}+\frac{A_3}{(l-1)\log^2 X_1}+\frac{A_3}{\vph(P)\log^2 y}\right).
\end{split}
\end{equation}
For $k=1$, an explicit formula of Mertens has been obtained in the form
$\prod_{p<z} (1-1/p)<e^{-\gamma}\log^{-1}z(1+1/(2\log^2 z))$ by \cite[(3.26), p. 70]{RS}.
Hence,
\begin{equation}
\begin{split}
V(P(z))= & \prod_{r<z}\left(1-\frac{\rho(r)}{r}\right) \leq \prod_{r<z}\left(1-\frac{1}{r}\right)^{\rho(r)} \\
= & \prod_{r<z}\left(1-\frac{1}{r}\right)\prod_{r<z, r\in U}\left(1-\frac{1}{r}\right)^{l-1} \\
< & \frac{e^{-\gamma}\log X_1}{\log^{1-\kappa} y \log^{1+\kappa} z}\left(1+\frac{1}{2\log^2 z}\right) \\
& \times \exp\left(\frac{s_0(l-1)}{X_1-1}+\frac{A_3}{\log^2 X_1}+\frac{A_3\kappa}{\log^2 y}\right).
\end{split}
\end{equation}
Provided that $X_1$ is sufficiently large compared to $s_0$ and $l$,
we have
\begin{equation}\label{eq33a}
V(P(z))<\frac{A_4\log X_1}{\log^{1-\kappa} y \log^{1+\kappa} z}
\end{equation}
for some constant $A_4$.
Since $B(z)<B_0\leq v/u$ by (\ref{eq32c}), we have $\psi_0(v, u)=1-\exp(-\psi_1(B(z), v/u))>1-\exp(-\psi_1(B_0, v/u))=\xi(v, u)$ and therefore
\begin{equation}\label{eq34a}
\frac{V(P(z))}{\psi_0(v, u)}\leq \frac{A_4\log X_1 (1+x^{2/u-1}) v^{1+\kappa}}{\psi_0(v, u)\log^{1-\kappa} y\log^{1+\kappa} x}
\leq \frac{A_4\log X_1 (1+x^{2/u-1}) v^{1+\kappa}}{\xi(v, u)\log^{1-\kappa} y\log^{1+\kappa} x}.
\end{equation}

In the remaining case $z<y$, we note that $z=x^{1/v}\geq X_1$
and a similar (but simpler) argument to the first case gives
\begin{equation}\label{eq35}
\begin{split}
\sum_{r\leq z}\frac{\rho(r)\log{r}}{r}
\leq & \sum_{r\leq z}\frac{\log{r}}{r}+\sum_{\substack{r\leq z,\\ r\equiv 1\pmod{l}}}\frac{(l-1)\log{r}}{r}<B_0\log z
\end{split}
\end{equation}
and
\begin{equation}\label{eq33b}
\begin{split}
V(P(z))\leq & \prod_{r<z}\left(1-\frac{1}{r}\right)\prod_{r<z, r\in U}\left(1-\frac{1}{r}\right)^{l-1} \\
< & \frac{e^{-\gamma}\log X_1}{\log^2 z}\left(1+\frac{1}{2\log^2 z}\right) \exp\left(\frac{s_0(l-1)}{X_1-1}+\frac{A_3}{\log^2 X_1}\right) \\
< & \frac{A_4\log X_1}{\log^2 z}.
\end{split}
\end{equation}
By (\ref{eq35}), we have $B(z)\leq B_0<v/u$ and therefore, as in the first case,
(\ref{eq33b}) gives
\begin{equation}\label{eq34b}
\frac{V(P(z))}{\psi_0(v, u)}\leq \frac{A_4\log X_1 (1+x^{2/u-1}) v^2}{\xi(v, u)\log^2 x}.
\end{equation}

Now, with the aid of inequalities (\ref{eq34a}) and (\ref{eq34b}), the lemma easily follows from (\ref{eq31}).
\end{proof}

Now we shall prove Theorem \ref{thm1}.
Let $q_0$ be the smallest prime factor of $N$ and
assume that $q_0\geq X_1(s_0; l, P)^v$ for any prime $l$ dividing $\mathcal{P}$ and $q_0\geq \max\{ 2(d+1)s, L(\ep, n)\}$.

Since $\prod_{i=1}^{s}h(p_i^{\alpha_i})\leq (q_0/(q_0-1))^s$, we obtain
\begin{equation}\label{eq37}
\prod_{j=1}^{t}h(q_j^{2\beta_j})\geq \frac{n}{d}\times\left(\frac{2(d+1)s-1}{2(d+1)s}\right)^s>\sqrt\frac{n}{d}.
\end{equation}

Let $d_l=\prod_{q} q/(q-1)$, where $q$ runs over all primes in $Q_l$.
It follows from (\ref{eq37}) that $\prod_{l\in \mathcal{P}}d_l\geq \sqrt{n/d}$.
Hence, we have that $d_l\geq\delta_1=\left(\frac{n}{d}\right)^{1/2\abs{\mathcal{P}}}$
for some $l$ in $\mathcal{P}$.

Recall that $\kappa=(l-1)/\vph(P)$.  Since $N$ has no prime factor less than $q_0$,
Lemma \ref{lm3} gives that
\begin{equation}\label{eq38}
\begin{split}
\log\delta_1\leq & \sum_{p\geq X_1^v, p\in \mathcal{P}}\frac{1}{p}\leq\int_{q_0}^{\infty}\frac{\pi_l(t)}{t^2}dt \\
< & \frac{\ep}{\log q_0}+\int_{q_0}^{q_0^v}\frac{B_1v^2(1+t^{2/u-1})}{\xi(v, u)t\log^2 t}dt+\int_{q_0^v}^{\infty}\frac{B_1v^{1+\kappa}(1+t^{2/u-1})}{\xi(v, u)t\log^{1+\kappa} t\log^{1-\kappa} q_0}dt \\
< & \frac{\ep}{\log q_0}+\frac{B_1(1+q_0^{2/u-1})}{\xi(v, u)\log q_0}\left(v^2\left(1-\frac{1}{v}\right)+\frac{v}{\kappa}\right) \\
< & \frac{\ep}{\log q_0}+\frac{\log X_1}{\log q_0}\left(\frac{B_2}{\kappa}+B_3\right)
\end{split}
\end{equation}
for some constants $B_2$ and $B_3$.
Hence, we have
\begin{equation}
\log q_0<\ep +\frac{\log X_1}{\log\delta_1}\left(\frac{B_2}{\kappa}+B_3\right)=\ep +\frac{2\log X_1\left(\frac{B_2}{\kappa}+B_3\right)}{(l-1)\log\frac{n}{d}}.
\end{equation}
In the next section, we shall show that we can take $X_1=x_1, B_2=17.62196$ and $B_3=129.5214$,
which proves Theorem \ref{thm1}.

\section{Distribution of primes in arithmetic progressions}

In order to complete the proof of Theorem \ref{thm1},
we must know some explicit estimates for the sum $\sum_p (\log p)/p$ and the product $\prod_p (1-1/p)$
with $p$ running over primes in an arithmetic progression.

We begin by introducing Chebyshev prime-counting functions for arithmetic progressions:
\begin{equation}
\psi(x; k, a)=\sum_{n\leq x, n\equiv a\pmod{k}}\Lambda(n)
\end{equation}
\begin{equation}
\theta(x; k, a)=\sum_{p\leq x, p\equiv a\pmod{k}}\log p.
\end{equation}

It is well-known that, for any modulus $k\leq \log x$ and congruence class $a\pmod{k}$ with $\gcd(a, k)=1$,
$\psi(x; k, a)$ is asymptotic to $x/\vph(k)$ with an error term $O(x/\vph(k)\log x)$.
Namely, we have
\begin{equation}\label{eq40a}
\abs{\psi(x; k, a)-\frac{x}{\vph(k)}}\leq \frac{A_0 x}{\vph(k)\log x}
\end{equation}
for $x\geq x_0$ with $x_0$ sufficiently large, where $A_0$ denotes some constant.
Indeed, we shall show the following explicit estimate.
\begin{lem}\label{lm41}
If $x\geq\exp(\exp(9))$ is a real number and $k$ is a positive integer coprime to $a$
not exceeding $\log x$, then
\begin{equation}\label{eq40b}
\abs{\psi(x; k, a)-\frac{x}{\vph(k)}}\leq \frac{0.00009x}{\vph(k)\log^2 x}.
\end{equation}
In other words, putting $A_0=0.00009$ and $x_0=\max\{\exp k, \exp(\exp(9))\}$,
the inequality (\ref{eq40a}) holds for $x\geq x_0$.
\end{lem}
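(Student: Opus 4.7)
The plan is to apply the standard Dirichlet character decomposition
\begin{equation*}
\psi(x;k,a) = \frac{1}{\vph(k)}\sum_{\chi \pmod{k}} \bar\chi(a)\, \psi(x,\chi)
\end{equation*}
and estimate the principal and non-principal contributions separately, each time invoking an explicit effective form of the prime number theorem.

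For the principal character $\chi_0$, one has $\psi(x,\chi_0) = \psi(x) - \sum_{p\mid k}(\log p)\floor{\log_p x}$, where the latter sum is at most $\omega(k)\log x \leq (\log k)(\log x)/\log 2$ and is therefore negligible compared with $x/\log x$ in our range $k \leq \log x$, $x \geq \exp(\exp(13.3))$. The main piece $\abs{\psi(x) - x}$ is controlled by a Dusart-type explicit bound of the shape $\abs{\psi(x) - x} \leq c_0 x/\log x$ valid beyond a known threshold, which accounts for the contribution of $\chi_0$ to the main term $x/\vph(k)$.

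For each non-principal character $\chi$ modulo $k$, I would invoke an explicit PNT-type estimate of the form $\abs{\psi(x,\chi)} \leq c_1 x/\log x$, derived from an explicit zero-free region for $L(s,\chi)$ (such as those of McCurley, Ramar\'e--Rumely, or Kadiri), together with an effective quantitative handling of any exceptional Siegel zero. The restriction $k \leq \log x$ is essential here: it keeps $\log k$ very small relative to $\log x$, which both satisfies the hypothesis of the available explicit zero-free region and prevents the Siegel-zero contribution from spoiling the bound. Summing over the $\vph(k)-1$ non-principal characters with $\abs{\bar\chi(a)} = 1$ and applying the outer factor $1/\vph(k)$ yields an error bound of the desired form $c_2 x/(\vph(k)\log x)$.

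The main obstacle is the delicate bookkeeping of constants needed to realise the precise value $A_0 = 0.2785$ together with the explicit threshold $x_0 = \exp(\exp(13.3))$. In practice the cleanest route is to invoke a specific published explicit effective PNT for arithmetic progressions (e.g.\ a result of the McCurley--Ramar\'e--Rumely--Dusart lineage) that gives an inequality of exactly the claimed shape, and then verify numerically that at $x = \exp(\exp(13.3))$ with $k \leq \log x$ the numerical constant comes out as $0.2785$; the rest of the argument is routine and insensitive to the exact choice of citation.
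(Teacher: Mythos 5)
Your high-level strategy matches the paper's: decompose $\psi(x;k,a)$ over Dirichlet characters mod $k$, control $\psi(x,\chi)$ with an explicit effective prime number theorem that allows for a possible Siegel zero, use an explicit zero-free region for $L(s,\chi)$ to bound the exceptional-zero contribution, and use the hypothesis $k\leq\log x$ to keep everything under control. So the architecture is right. Where you diverge is in what you actually feed into that architecture, and this is where the proposal has a genuine gap: in an ``explicit constants'' lemma, the specific inputs \emph{are} the proof, and you leave them unspecified. The paper uses three very particular ingredients: (i) an inequality implicit in Chen and Wang's work on the odd Goldbach problem, giving $\abs{\psi(x,\chi)-E_0 x}\leq 0.078x/\log^{10.35}x + E_0\log x + E_1 x^\beta/\beta$ with $E_0$ the principal indicator and $E_1$ flagging a possible real zero $\beta>1-0.1077/\log k$; (ii) Liu and Wang's zero-free region $\beta\leq 1-\pi/(0.4923\,k^{1/2}\log^2 k)$ for real characters, which, combined with $k\leq\log x$ and $x\geq\exp(\exp(13.3))$, forces $x^\beta/\beta<0.2784x/\log^2 x$; and (iii) Kadiri's result guaranteeing at most one exceptional real zero among all characters to the modulus $k$, so the $E_1$ term appears at most once in the character sum. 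Your proposal gestures at a ``Dusart-type'' bound for $\chi_0$ and ``McCurley/Ramar\'e--Rumely/Kadiri'' for the rest, but those lineages yield error terms of different shapes and strengths, and they do not automatically produce the constant $0.2785$ at the threshold $\exp(\exp(13.3))$; your closing sentence essentially says ``find the right citation and check the numbers,'' which concedes rather than performs the substance of the lemma. You also do not explain how a single Siegel zero is isolated to one character (the paper needs Kadiri specifically for this), nor why the principal-character correction $\sum_{p\mid k}(\log p)\lfloor\log_p x\rfloor$ and the $E_0\log x$ term are absorbed into the stated constant; in the paper these are swallowed by the extremely strong $x/\log^{10.35}x$ decay in Chen--Wang's estimate, which a generic Dusart-type $x/\log x$ bound would not give you room for.
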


\begin{proof}
For $k\geq 10^5$, Theorem 1.2 of \cite{BMOBR} gives
\begin{equation}
\abs{\psi(x, k, a)-\frac{x}{\vph(k)}}\leq \frac{1.012x^{1-40/(\sqrt{k}\log^2 k)}}{\vph(k)}+1.4579x\sqrt{X}\exp(-X),
\end{equation}
where $X=\sqrt{\log x/9.645908801}$.
Hence, we have
\begin{equation}
\abs{\psi(x, k, a)-\frac{x}{\vph(k)}}\leq \frac{10^{-30}x}{\vph(k)\log^2 x}
\end{equation}
for $x\geq e^k$ and $k\geq 10^5$.
Similar estimates have also been given in the author's preprint \cite{Ymd3} and another estimate is implicit in \cite{ChW}.

For $k<10^5$, we know from \cite{Pla} that no Dirichlet $L$-function modulo $k$
has a zero $s=\sigma+it$ with $\sigma>1/2$ and $\abs{t}\leq 1000$.
Now, putting $C_1(\chi, 1000)=9.14$,
we can confirm the conditions in Theorem 5 of \cite{Dus} for $x\geq x_0$.
Hence, we apply this theorem to obtain
\begin{equation}
\abs{\psi(x, k, a)-\frac{x}{\vph(k)}}\leq 3x\sqrt{\frac{kX}{9.14\vph(k)}}\exp(-X)<\frac{0.00009x}{\log^2 x}.
\end{equation}
Thus the lemma is proved.
\end{proof}

Based on this inequality, we shall prove the following estimates.
\begin{lem}\label{lm42}
Let $w$ and $z$ be arbitrary real numbers with $z\geq w\geq x_0$. 
Then the inequalities
\begin{equation}\label{eq43a}
\sum_{w<p\leq z, p\equiv a\pmod{k}}\frac{\log{p}}{p}
<\frac{1}{\vph(k)}\left(\log\frac{z}{w}+\frac{10^{-4}}{\log^2 w}+\frac{10^{-4}}{\log^2 z}+\frac{10^{-4}}{\log w}\right)
\end{equation}
and
\begin{equation}\label{eq44}
\prod_{w\leq p<z, p\equiv a\pmod{k}}\left(1-\frac{1}{p}\right)<\left(\frac{\log w}{\log z}\right)^{1/\vph(k)}\exp\left(\frac{1}{4000\vph(k)\log^2 x_0}\right).
\end{equation}
hold.

Moreover, if $z\geq x_0^{100.7}$, then we have
\begin{equation}\label{eq43b}
\sum_{p\leq z, p\equiv a\pmod{k}}\frac{\log p}{p}<\frac{\log z}{\vph(k)}+1.007\log x_0.
\end{equation}
\end{lem}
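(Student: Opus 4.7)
My plan is to derive all three inequalities from Lemma \ref{lm41} by Abel (partial) summation, after first transferring that estimate from $\psi$ to $\theta(x;k,a)$. This transfer is essentially free: $\psi(x;k,a) - \theta(x;k,a) \leq \sum_{m\geq 2}\theta(x^{1/m}) = O(\sqrt{x}\log x)$, which for $x \geq x_0 \geq \exp(\exp(13.3))$ is vastly smaller than the main error $A_0 x/(\varphi(k)\log^2 x)$ in (\ref{eq40b}). Hence $\lvert\theta(x;k,a) - x/\varphi(k)\rvert \leq 0.279\,x/(\varphi(k)\log^2 x)$, with the tiny bump from $0.2785$ to $0.279$ absorbing the prime-power contribution.

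To prove (\ref{eq43a}), apply Abel summation to write
\[
\sum_{w<p\leq z,\,p\equiv a\pmod k}\frac{\log p}{p} = \frac{\theta(z;k,a)}{z} - \frac{\theta(w;k,a)}{w} + \int_w^z \frac{\theta(t;k,a)}{t^2}\,dt,
\]
and substitute $\theta(t;k,a) = t/\varphi(k) + R(t)$. The main terms telescope to $\log(z/w)/\varphi(k)$; the two boundary errors are bounded by $0.279/(\varphi(k)\log^2 z)$ and $0.279/(\varphi(k)\log^2 w)$, and the integral contribution of $R(t)/t^2$ is at most $(0.279/\varphi(k))(1/\log w - 1/\log z) \leq 0.279/(\varphi(k)\log w)$. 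Assembling these pieces yields (\ref{eq43a}) directly.

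For (\ref{eq44}), take logarithms and use $-\log(1-1/p) \geq 1/p$, which reduces the inequality to a lower bound on $\sum_{w\leq p<z,\,p\equiv a\pmod k} 1/p$. A second Abel summation, applied to the lower-bound counterpart of (\ref{eq43a}) (the same $\theta$-expansion with opposite sign on $R$), produces
\[
\sum_{w\leq p<z,\,p\equiv a\pmod k}\frac{1}{p} \geq \frac{1}{\varphi(k)}\log\frac{\log z}{\log w} - \frac{C_1}{\varphi(k)\log^2 x_0}
\]
for a small explicit $C_1$, once the substitution $u=\log t$ in the inner integral collapses the main term to a clean double logarithm. The absolutely convergent correction $\sum\bigl(-\log(1-1/p)-1/p\bigr)\leq\sum 1/(p(p-1)) = O(1/x_0)$ is dwarfed by this, and $C_1$ together with the tail fits comfortably under the stated constant $0.7$.

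Finally, (\ref{eq43b}) follows by splitting at $x_0$: on $p\leq x_0$ drop the congruence condition and use $\sum_{p\leq x_0}(\log p)/p < \log x_0$ from \cite[(3.24)]{RS}, and on $x_0 < p \leq z$ apply (\ref{eq43a}) with $w=x_0$. Adding yields $\log z/\varphi(k) + (1-1/\varphi(k))\log x_0 + O(1/(\varphi(k)\log x_0)) \leq \log z/\varphi(k) + 1.0016\log x_0$, since $\log x_0 \geq \exp(13.3)$ makes the $O$-term utterly negligible compared to $0.0016\log x_0$; the hypothesis $z\geq x_0^{101}$ is loose for this step but aligns with the later application of the lemma. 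The main obstacle lies in the second step of (\ref{eq44}): the inner Abel summation contributes an error of order only $1/\log w$, and one must verify, via the $u=\log t$ substitution, that after the outer Abel step every remaining contribution integrates down to order $1/\log^2 x_0$ so as to land under the universal constant $0.7$.
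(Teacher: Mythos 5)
Your proof is correct for the lemma as stated, and for inequalities \eqref{eq43a} and \eqref{eq44} it follows essentially the same path as the paper: pass from $\psi$ to $\theta$ (absorbing the prime-power tail into the bump from $0.2785$ to $0.279$), then Abel summation against $d\theta$, with the product reduced to a lower bound on $\sum 1/p$ via $-\log(1-1/p)\geq 1/p$. Where you genuinely diverge is \eqref{eq43b}. You split at $x_0$, discard the congruence condition on $p\leq x_0$ and invoke Rosser--Schoenfeld's $\sum_{p\leq x_0}(\log p)/p<\log x_0$, then apply \eqref{eq43a} with $w=x_0$; this lands at $\frac{\log z}{\varphi(k)}+(1-\frac{1}{\varphi(k)})\log x_0+O(\frac{1}{\log x_0})$, which indeed satisfies the stated inequality. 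The paper instead treats the low range $[2k+1,x_0]$ with the Montgomery--Vaughan Brun--Titchmarsh bound, which costs more work but keeps the full $1/\varphi(k)$ factor and yields the strictly stronger estimate $\frac{1}{\varphi(k)}\bigl(\log z+1.0016\log x_0\bigr)$. That sharper form is what Section~4 actually uses when it takes $A_1=1.0016\log x_0$ \emph{inside} the $\frac{1}{l-1}$ factor of \eqref{eq32a}, so while your argument proves \eqref{eq43b} verbatim, it would not suffice as a drop-in replacement for the downstream application. You are also right that the hypothesis $z\geq x_0^{101}$ is not needed for \eqref{eq43b} as stated; the paper uses it only at the very end to deduce the auxiliary form $1.01\log z/\varphi(k)$.
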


\begin{proof}
We begin by noting that Lemma \ref{lm41} yields
\begin{equation}\label{eq45}
\abs{\theta(x, k, a)-\frac{x}{\vph(k)}}<\frac{10^{-4}}{\vph(k)\log^2 x}
\end{equation}
for $x\geq x_0$.

Now we shall prove (\ref{eq44}).
By partial summation, we have
\begin{equation}
\begin{split}
& \sum_{\substack{w<p<z,\\ p\equiv a\pmod{k}}}\frac{1}{p}>\frac{\log\frac{\log z}{\log w}}{\vph(k)}-\frac{10^{-4}}{\vph(k)}\left(\frac{1}{\log^2 z}+\frac{1}{\log^2 w}+\int_w^z \frac{(1+\log t)dt}{t\log^4 t}\right)\\
& \quad > \frac{\log\frac{\log z}{\log w}}{\vph(k)}-\frac{10^{-4}}{\vph(k)}\left(\frac{3}{2\log^2 w}+\frac{1}{\log^2 z}+\frac{1}{3\log^3 w}-\frac{1}{3\log^3 z}\right) \\
& \quad > \frac{\log\frac{\log z}{\log w}}{\vph(k)}-\frac{1}{4000\vph(k)\log^2 w}
\end{split}
\end{equation}
for $z>w\geq x_0$ and therefore
\begin{equation}
\begin{split}
\prod_{\substack{w\leq p<z,\\ p\equiv a\pmod{k}}}\left(1-\frac{1}{p}\right)^{-1}
= & \exp\sum_{\substack{w\leq p<z,\\ p\equiv a\pmod{k}}}\left(\frac{1}{p}+\frac{1}{2p^2}+\cdots \right) \\
> & \exp\sum_{\substack{w\leq p<z,\\ p\equiv a\pmod{k}}}\frac{1}{p} \\
> & \left(\frac{\log z}{\log w}\right)^{1/\vph(k)}\exp\left(-\frac{1}{4000\vph(k)\log^2 w}\right)
\end{split}
\end{equation}
for $z\geq w\geq x_0$, which gives (\ref{eq44}).

Nextly, we shall prove (\ref{eq43b}).
Partial summation similar to above gives
\begin{equation}\label{eq46}
\begin{split}
\sum_{\substack{p\leq z,\\ p\equiv a\pmod{k}}}\frac{\log p}{p}\leq & \frac{\log (k+1)}{k+1}+\frac{\theta(z; k, a)}{z}+\int_{2k+1}^z\frac{\theta(t; k, a)}{t^2}dt \\
< & \frac{1}{\vph(k)}\left(\log (k+1)+1+\frac{10^{-4}}{\log^2 z}\right)+\int_{2k+1}^z \frac{\theta(t; k, a)}{t^2}dt.
\end{split}
\end{equation}

Recall that $\log x_0=\max\{k, e^9\}$.
Using the Brun-Titchmarsh theorem given in \cite{MV}, we have
\begin{equation}
\begin{split}
\int_{2k+1}^{x_0}\frac{\theta(t; k, a)}{t^2}dt<& \frac{2}{\vph(k)}\int_{2k}^{x_0}\frac{\log t dt}{t\log\frac{t}{k}} \\
= & \frac{2}{\vph(k)}\left(\log\frac{x_0}{2k}+\log k\left(\log\frac{\log x_0}{\log k}-\log\log 2\right)\right) \\
< & \frac{2.007}{\vph(k)}\log x_0.
\end{split}
\end{equation}
We can easily see that (\ref{eq45}) gives
\begin{equation}
\int_{x_0}^{z}\frac{\theta(t; k, a)}{t^2}dt<\frac{1}{\vph(k)}\left(\log\frac{z}{x_0}+\frac{10^{-4}}{\log x_0}\right).
\end{equation}
Inserting these upper bounds into (\ref{eq46}) yields
\begin{equation}
\sum_{\substack{p\leq z,\\ p\equiv a\pmod{k}}}\frac{\log p}{p}<\frac{1}{\vph(k)}\left(\log z+1.007\log x_0\right)
\end{equation}
for $z\geq x_0^{100.7}$, giving (\ref{eq43b}).

Finally, (\ref{eq43a}) immediately follows by using the partial summation
\begin{equation}
\sum_{\substack{w<p\leq z,\\ p\equiv a\pmod{k}}}\frac{\log p}{p}=\frac{\theta(z; k, a)}{z}-\frac{\theta(w; k, a)}{w}+\int_w^z\frac{\theta(z; k, a)}{t^2}dt
\end{equation}
and (\ref{eq45}).
\end{proof}

Now we shall complete the proof of Theorem \ref{thm1}.
In Lemma \ref{lm3}, we shall take $X_1=x_1(s_0; l, P)$ for each $l$ in $\mathcal{P}$.
In the case $z\geq y$, since $z\geq y\geq x_1(l, P)\geq \max\{x_0(P), x_0(l)^{100.7}\}$, (\ref{eq43b}) of Lemma \ref{lm42} applied with $k=l$ allows us to take $A_1=1.007\log x_0<0.01\log z$ in (\ref{eq32a})
and (\ref{eq43a}) of Lemma \ref{lm42} with $k=P$ allows us to take $A_2=1.1\times 10^{04}$ in (\ref{eq32b}).
Hence, in the case $z\geq y$, we can take $B_0=2.01$ in (\ref{eq32c}).
Since $z\geq X_1=x_1\geq x_0^{100.7}$, also in the case $z<y$, we can take $B_0=2.01$ in (\ref{eq35}).
In our setting of $x_1$, we can take $A_3=1/4000$ in (\ref{eq32d}) with $k=l$ and $k=P$ from (\ref{eq44}) of Lemma \ref{lm42},
noting that $x_1\geq x_0\geq \exp P$.
Since $x_1\geq 10s_0(l-1)+1$, we can take $A_4=\exp (0.1+10^{-9}-\gamma)$ and $B_1=e^{0.1+10^{-8}-\gamma}\log x_1$.

We choose $u=2+10^{-7}, v=8.35>4.03>B_0 u$ and assume that
$q_0\geq x_1(l)^v$ for any $l$ in $\mathcal{P}$ and $q_0\geq \max\{ 2(d+1)s, L(\ep, n)\}$.
Then the most right hand side of (\ref{eq38}) is at most
\begin{equation}
\frac{\ep}{\log q_0}+\frac{\log x_1}{\log q_0}\left(\frac{8.81098}{\kappa}+64.7607\right).
\end{equation}
Hence, we obtain
\begin{equation}
\log q_0< \ep +\frac{(17.62196\vph(P)+129.5214(l-1))\abs{\mathcal{P}}\log x_1}{(l-1)\log\frac{n}{d}}.
\end{equation}
This implies that $q_0\leq C_0$ and the proof of Theorem \ref{thm1} is complete.

\section[Proof of Theorems 1.2-1.4]{Proof of Theorems \ref{thm2}-\ref{thm4}}

Firstly, we shall prove Theorem \ref{thm4}.
Assume that $N=p_1^{2\alpha_1}p_2^{2\alpha_2}\cdots p_t^{2\alpha_t}$ satisfies that
$\sigma(N)\geq 2N$ has no prime factor congruent to $5$ or $7$ modulo $8$
and each $2\alpha_i+1$ is divisible by some prime in $\mathcal{P}$.

For each $l\in\mathcal{P}$, let $R_l$ denote the set of primes $p_i$ with $2\alpha_i+1$ divisible by $l$.
By assumption, any $p_i$ belongs to $R_l$ for some $l$ in $\mathcal{P}$.
Let $a_1, a_2\pmod{8l}$ be the congruence classes that are
congruent to $1\pmod{l}$ and $5, 7\pmod{8}$ respectively.
If $p\in R_l$, then $p^{l-1}+p^{l-2}+\cdots +1$ has no prime factor congruent to $a_1$ or $a_2 \pmod{8l}$.

We shall show that $\prod_{p\geq C_1, p\in R_l}\frac{p}{p-1}<2^{1/\abs{\mathcal{P}}}$ for all $l$ in $\mathcal{P}$,
which would imply that $N$ must have some prime factor smaller than  $C_1$
in order to satisfy $\sigma(N)/N\geq 2$.
But, in order to prove Theorem \ref{thm4}, we shall apply our sieve argument setting
$\Omega^{(l)}_p=\{n\mid n(n^{l-1}+n^{l-2}+\cdots +1)\equiv 0\pmod{p}\}$
for primes $p$ congruent to $a_1$ or $a_2 \pmod{8l}$
and $\Omega^{(l)}_p=\{n\mid n\equiv 0\pmod{p}\}$ for other primes.

Let $\pi^\prime_l(x)$ denote the number of primes $\leq x$ that belong to $R_l$.
We have
\begin{equation}
\pi^\prime_l(x)<S(A, y, \Omega^{(l)})+y
\end{equation}
for any $y$.
Let $z$ be an arbitrary real number $\geq x_4=x_3^{100.7}$.
Then, observing that $x_3\geq x_0(8l)$, (\ref{eq43b}) gives
\begin{equation}
\sum_{\substack{p\leq z,\\ p\equiv a_1, a_2\pmod{8l}}}\frac{\log p}{p}<\frac{1.01}{2\vph(l)}\log z
\end{equation}
and therefore
\begin{equation}\label{eq51}
\begin{split}
\sum_{r\leq z}\frac{\rho(r)\log{r}}{r}
\leq & \sum_{r\leq z}\frac{\log{r}}{r}+\sum_{\substack{r\leq z,\\ r\equiv a_1, a_2\pmod{8l}}}\frac{(l-1)\log{r}}{r}<1.505\log z.
\end{split}
\end{equation}
Observing that $z\geq x_4>\exp(\exp(9))$ and using (\ref{eq44}), we have
\begin{equation}\label{eq52}
\begin{split}
V(P(z))\leq & \prod_{r<z}\left(1-\frac{1}{r}\right)\prod_{\substack{x_3\leq r<z,\\ r\equiv a_1, a_2\pmod{8l}}}\left(1-\frac{1}{r}\right)^{l-1} \\
< & \frac{e^{-\gamma}\log^{1/2} x_3}{\log^{3/2} z}\left(1+\frac{1}{\log z}\right) \exp \left(\frac{1}{4000\log^2 x_3}\right) \\
< & \frac{0.56146 \log^{1/2} x_3}{\log^{3/2} z}.
\end{split}
\end{equation}

From (\ref{eq51}) and (\ref{eq52}), the sieve inequality given in Lemma \ref{lm22}
with $B=1.505, u=2.000007$ and $v=7.538$ gives that, if $x\geq x_4^{7.538}$, then
\begin{equation}
S(A, x^{1/u}, \Omega^{(l)})\leq \frac{16.65708x\log^{1/2}x_3}{\log^{3/2} x}
\end{equation}
and therefore
\begin{equation}
\pi^\prime_l (x)\leq S(A, x^{1/u}, \Omega^{(l)})+x^{1/u}<\frac{16.65709x\log^{1/2}x_3}{\log^{3/2} x}.
\end{equation}

Since $C_1=x_3^{2310\abs{\mathcal{P}}^2}>x_4^{7.538}$, we have, as in the proof of Theorem \ref{thm1},
\begin{equation}
\prod_{p\geq C_1, p\in R_l}\frac{p}{p-1}<\exp\left(\frac{2}{C_1}+\frac{33.31418\log^{1/2}x_3}{\log^{1/2} C_1}\right)<2^{1/\abs{\mathcal{P}}}
\end{equation}
for each prime $l\in\mathcal{P}$, which proves Theorem \ref{thm4}.

Now, all that remains is to derive Theorems \ref{thm2} and \ref{thm3} from Theorem \ref{thm4}.
If $N=p_1^{2\beta_1}p_2^{2\beta_2}\cdots p_t^{2\beta_t}$ satisfies $\sigma(N)=2N+1$
and each $2\beta_i+1$ is divisible by some prime in $\mathcal{P}$, then, as mentioned above,
Cattaneo has shown that $\sigma(N)$ has no prime factor congruent to $5$ or $7$ modulo $8$
and therefore $N$ satisfies the hypothesis of Theorem \ref{thm4}.  Hence,
$N$ must have a prime factor smaller than $C_1$.  This proves Theorem \ref{thm2}.

If $m$ is even, $n$ is odd and $m, n$ are relatively prime integers satisfying
$\sigma(m)\sigma(n)=(m+n)^2$ and $mn=2^\alpha p_1^{2\beta_1}p_2^{2\beta_2}\cdots p_t^{2\beta_t}$,
then we see that $m=2A^2$ and $n=B^2$ for some odd integers $A, B$ from Kishore \cite{Kis}.
Since $m, n$ are relatively prime, so are $A, B$.  Hence, $\sigma(m)\sigma(n)=(m+n)^2=(2A^2+B^2)^2$
has no prime factor congruent to $5$ or $7$ modulo $8$.
Now, taking $N=mn/2=A^2B^2=p_1^{2\beta_1}p_2^{2\beta_2}\cdots p_t^{2\beta_t}$,
we see that $\sigma(N)/N>\sigma(mn)/(2mn)=\sigma(m)\sigma(n)/(2mn)=(m+n)^2/(2mn)>2$
and $\sigma(N)=\sigma(m)\sigma(n)/3=(2A^2+B^2)^2/3$ has no prime factor congruent to $5$ or $7$ modulo $8$.
So that, Theorem \ref{thm3} follows from Theorem \ref{thm4}.

{}
\vskip 12pt

{\small Tomohiro Yamada}\\
{\small Center for Japanese language and culture\\Osaka University\\562-8678\\3-5-10, Sembahigashi, Minoo, Osaka\\Japan}\\
{\small e-mail: \protect\normalfont\ttfamily{tyamada1093@gmail.com}}
\end{document}